\journal{Arxiv}
\def\ps@pprintTitle{%
 \let\@oddhead\@empty
 \let\@evenhead\@empty
 \def\@oddfoot{}%
 \let\@evenfoot\@oddfoot}
\newcommand{\BLambda}{{B_\ell}}
\def\wb{{\mathbf w}}
\def\Tc{{\mathcal T}}
\def\iff{\ifmmode\Longleftrightarrow \else
        \unskip${}\Longleftrightarrow{}$\ignorespaces\fi}
\newtheorem{theorem}{Theorem}[section]
\newtheorem{lemma}[theorem]{Lemma}
\newtheorem{corollary}[theorem]{Corollary}
\theoremstyle{remark}
\newtheorem{remark}[theorem]{Remark}
\theoremstyle{definition}
\newtheorem{example}[theorem]{Example}
\newtheorem{definition}[theorem]{Definition}
\DeclareMathOperator{\Gr}{Gr}
\DeclareMathOperator{\Flag}{Fl}
\numberwithin{equation}{section}
\newcommand{\inwb}{{\rm in}_{{\bf w}_\ell}}
\newcommand{\inw}[1]{{\rm in}_{{\bf w}_{#1}}}
\newcommand{\II}{\mathcal{I}}
\DeclareMathOperator{\init}{in}
\begin{document}

\begin{frontmatter}

\title{Toric degenerations of flag varieties from matching field tableaux}


\author[add1]{Oliver Clarke}
\ead{oliver.clarke@bristol.ac.uk }
\address[add1]{
University of Bristol, School of Mathematics, BS8 1TW, Bristol, UK.}

\author[add2,add3]{Fatemeh Mohammadi}
\ead{fatemeh.mohammadi@ugent.be}
\address[add2]{
Department of Mathematics: Algebra and Geometry,
Ghent University, 
9000 Gent, 
Belgium}

\address[add3]{
Department of Mathematics and Statistics,
UiT – The Arctic University of Norway, 
9037 Troms\o, 
Norway}


\begin{abstract}
We present families of tableaux which interpolate between the classical semi-standard Young tableaux and matching field tableaux. Algebraically, this corresponds to SAGBI bases of Pl\"ucker algebras. We show that each such family of tableaux leads to a toric ideal, that can be realized as initial of the Pl\"ucker ideal, hence a toric degeneration for the flag variety. 
\end{abstract}
\begin{keyword}
Toric degenerations \sep SAGBI bases \sep Khovanskii bases \sep Grassmannians \sep Flag varieties \sep Semi-standard Young tableaux
\end{keyword}
\end{frontmatter}

\section{Introduction}\label{sec:intro}
Computing toric degenerations of varieties is a standard tool to extend the deep relationship between combinatorics and toric varieties to more general spaces \cite{An13}. Given a projective variety, a toric degeneration is a flat family whose fibre over zero is a toric variety and all its other fibres are isomorphic to the original variety. Hence, toric degenerations enable us to extend the computational methods from toric varieties to general varieties.

\smallskip

In particular, toric degenerations of flag varieties are extensively studied, see e.g. \cite{FFL16, GL96}. The most known degeneration is the Gelfand-Cetlin degeneration, studied in \cite{FvectorGC, KOGAN}, which is related to semi-standard Young tableaux and their associated polytopes. This example has been generalized in different directions 
\cite{Feigin2012, irelli2019linear}.
On the other hand, the Gelfand-Cetlin degenerations naturally arise from the Gr\"obner basis theory and the SAGBI basis theory for Pl\"ucker algebras. More precisely, given an $n\times n$ matrix of indeterminants the Gelfand-Cetlin degeneration corresponds to a choice of weight vector which picks the diagonal term of each minor as its initial term. Therefore, a natural question is whether changing the weight vector would lead to other toric degnerations of flag varieties.

\smallskip

In this note, using the theory of matching fields \cite{sturmfels1996grobner} we introduce a family of tableaux as follows. Let $\Lambda(k,n)$ be a set of $k\times 1$ tableaux of integers corresponding to all  $k$-subsets of $[n]=\{1,\ldots,n\}$ and $\Lambda_n$, or $\Lambda$ when there is no confusion, be the union of $\Lambda(k,n)$'s. By combining tableaux from $\Lambda$ side by side, we can construct larger tableaux.
Then, we let $R = \mathbb{K}[P_I : I \in \Lambda]$ be the polynomial ring whose variables are corresponding to tableaux in $\Lambda$. Note that each tableau $T = \{I_1, \dots, I_t\}$ corresponds to a monomial $P^T = P_{I_1} \dots P_{I_t}$. We denote $J_{\Lambda}$ for the ideal generated by binomials $P^T - P^S$ where $S$ and $T$ are two row-wise equal tableaux. 
Our main motivation for studying $J_\Lambda$ is toric degenerations: Given a family of tableaux $\Lambda$ that are inferred from a matching field, the goal is to construct a weight vector ${\bf w}_\Lambda$ and estimate its corresponding initial Pl\"ucker ideal, also known as a Gr\"obner degeneration.
Our approach to calculating toric degenerations of flag varieties is to see whether $J_\Lambda$ can be realized as the initial of the Pl\"ucker ideal with respect to ${\bf w}_\Lambda$.
For a particular class called block diagonal matching fields, we show that this is indeed the case, and hence we provide a new family of toric degenerations for flag varieties.

\smallskip

Our work is related to computing the tropicalizations of Grassmannians and flag varieties \cite{bossinger2020families,speyer2004tropical, fink2015stiefel,KM16,KristinFatemeh, rietsch2017newton}. More precisely, we present a combinatorial method to compute several maximal cones in the tropicalizations of flag varieties.
We remark that for small flag varieties, calculating tropicalizations is explicitly carried out in \cite{bossinger2017computing}, however for larger dimensions theses computations are very hard.

\smallskip
The paper is structured as follows.
In \S\ref{sec:prelim}, we introduce the notation and definitions which we will use throughout the paper. In particular, we define matching fields and introduce their corresponding tableaux and ideals.
In \S\ref{sec:tableaux}, we highlight some important features of these tableaux and we show that the ideals $J_\Lambda$ are quadratically generated. In \S\ref{sec:flag}, we identify a family of $2$-column  matching field tableaux in bijection with $2$-column semi-standard tableaux, which enables us to transfer the classical algebraic results from semi-standard to matching field tableaux. 
We then apply our results to show that the set of Pl\"ucker forms is a SAGBI basis for the Pl\"ucker algebra with respect to the weight vector induced by any block diagonal matching field. This leads to a new family of toric degenerations of flag varieties.

\smallskip
\noindent{\bf Acknowledgement.} The authors are very grateful to J\"urgen Herzog
for helpful conversations. 
OC is supported by EPSRC Doctoral Training Partnership award EP/N509619/1.
FM is partially supported by a BOF Starting Grant of Ghent University and EPSRC Early Career Fellowship EP/R023379/1.


\section{Preliminaries}\label{sec:prelim}

Throughout we fix a field $\mathbb{K}$ with char$(\mathbb{K})=0$. We are mainly interested in the case when 
$\mathbb{K}=\mathbb{C}$. We let $[n]$ be the set $\{1, \dots, n \}$ and by $\mathbb{P}^n$ we denote the collection of all proper subsets of $[n]$. We let $S_n$ be the symmetric group on $[n]$. 

\smallskip

\noindent{\bf 1.1. Flag varieties.} 
A full flag is a sequence of vector subspaces of $\mathbb{K}^n$, 
$$\{0\}= V_0\subset V_1\subset\cdots\subset V_{n-1}\subset V_n=\mathbb{K}^n$$
where ${\rm dim}_{\mathbb{K}}(V_i) = i$. The set of all full flags is called the flag variety denoted by $\Flag_n$, which is naturally embedded  in a product of Grassmannians using the Pl\"ucker variables.
Each point in the flag variety can be represented by an $n\times n$ matrix $X=(x_{i,j})$ whose first $k$ rows generate $V_k$. Each $V_k$ corresponds to a point in the Grassmannian $\Gr(k,n)$. So the ideal of $\Flag_n$, denoted by $I_n$ is the kernel  of the map,
\[
\varphi_n:\  \mathbb{K}[P_J:\ \emptyset\neq J\subsetneq[n]]\rightarrow \mathbb{K}[x_{i,j}:\ 1\leq i\leq n-1,\ 1\leq j\leq n]
\]
sending each Pl\"ucker variable
$P_J$ to the determinant of the submatrix of $X$ with row indices $1,\ldots,|J|$ and column indices in $J$. 
Similarly, we define the Pl\"ucker ideal of $\Gr(k,n)$, denoted by $G_{k,n}$, as the kernel of the map $\varphi_n$, restricted to the variables $P_J$ with $|J|=k$.

\smallskip

\noindent{\bf 1.2. Gr\"obner degeneration.} 
Let $R=\mathbb{K}[{\bf P}]$ be the polynomial ring on $d$ Pl\"ucker variables and let $f=\sum a_{\textbf u} {\bf P}^{\textbf u}$ with $\textbf{u}\in \mathbb {Z}_{\ge 0}^{d}$. For each $\textbf w\in \mathbb{R}^{d}$ we define the \textit{initial form} of $f$ to be 
\[
\init_{\textbf w}(f)= \sum_{\textbf w\cdot \textbf u \text{ is minimal}} a_{\textbf u} {\bf P}^{\textbf u}.
\]
If $I$ is an ideal in $R$, then its \textit{initial ideal} with respect to $\textbf w$ is 
$
\init_{\textbf w}(I)=\langle \init _{\textbf w}(f) : f\in I\rangle.
$

An important feature of an initial ideal that we want to emphasize here is that it naturally gives rise to a flat degeneration of the variety $V(I)$ into the variety $V(\init_{\textbf w}(I))$ 
called the \textit{Gr\"obner degeneration}. 
Our goal is to find such degenerations where $\init_{\textbf w}(I)$ is toric. 

\medskip

\noindent {\bf 1.3. Matching fields.} 
A \emph{matching field} is a map $\Lambda_n : \mathbb{P}^n \rightarrow S_n$.
When there is no ambiguity, we write $\Lambda$ for $\Lambda_n$. Suppose $I = \{i_1 < \dots < i_k \} \subset [n]$,
we think of the permutation {$\sigma=\Lambda(I)$} as inducing a new ordering on the elements of $I$, 
where the position of $i_s$ is determined by the value of $\sigma(s)$. Note that we can restrict a matching field to all subsets of $[n]$ of size $k$ which are the matching fields associated to Grassmannians.

Given a matching field $\Lambda$ and a $k$-subset $I \subset [n]$, we represent the Pl\"ucker form $P_I$ as a $k \times 1$ tableau with entries given by $I$ ordered by $\Lambda$. Let $X=(x_{i,j})$ be an $n \times n$ matrix of indeterminants. To $I\subset [n]$ {with $\sigma=\Lambda(I)$} we associate the monomial 
$
\textbf{x}_{\Lambda(I)}:=x_{\sigma(1) i_{1}}x_{\sigma(2)i_2}\cdots x_{{\sigma(k)i_k}}. 
 $
A {\em matching field ideal} $J_\Lambda$ is defined as the kernel of the monomial map, 
\begin{eqnarray}\label{eqn:monomialmap}
\phi_{\Lambda} \colon\  & \mathbb{K}[P_I]  \rightarrow \mathbb{K}[x_{ij}]  
\quad\text{with}\quad
 P_{I}   \mapsto \text{sgn}(\Lambda(I)) \textbf{x}_{\Lambda(I)},
\end{eqnarray}
where sgn denotes the sign of the permutation $\Lambda(I)$. 
\smallskip

\begin{definition}
A matching field $\Lambda$  is \emph{coherent} if there exists an $n\times n$ matrix $M$ with entries in $\mathbb{R}$ 
such that for every proper subset $I$ in $\mathbb{P}^n$
the initial form of the  Pl\"ucker form  $P_I \in \mathbb{K}[x_{ij}]$,  $\text{in}_M (P_I) $ is $ \text{sgn}(\Lambda(I)) \mathbf{x}_{\Lambda(I)}$. Where, $\text{in}_M (P_I)$ is the sum of all terms in $P_I$ with the lowest weight with respect to $M$.
In this case, we say that the matrix $M$ \emph{induces the matching field} $\Lambda$.
\end{definition}

\begin{example}\label{example:coherent}
Consider the coherent matching field $\Lambda_4$ induced by the matrix $M$ below. The value $M_{ij}$ is the weight of the variable $x_{ij}$ which lies in the matrix $X$: 
\[
M = 
\begin{bmatrix}
0  &0  &0  &0  \\
3  &2  &1  &4  \\
8  &6  &4  &2  \\
12 &9  &6  &3  \\
\end{bmatrix}\, , \quad X = 
\begin{bmatrix}
x_{11}  &x_{12}  &x_{13}  &x_{14}  \\
x_{21}  &x_{22}  &x_{23}  &x_{24}  \\
x_{31}  &x_{32}  &x_{33}  &x_{34}  \\
x_{41}  &x_{42}  &x_{43}  &x_{44}  \\
\end{bmatrix}\, .
\]
Let us consider the Pl\"ucker form $P_{24}$ which is the determinant of the submatrix of $X$ consisting of the first two rows and the second and fourth columns.
\[
P_{24} =
\begin{vmatrix}
x_{12} & x_{14} \\
x_{22} & x_{24}
\end{vmatrix}
= x_{12}x_{24} - x_{14}x_{22}.
\]

The weight of the Pl\"ucker form is the minimum weight of monomials appearing in $P_{24}$. We see that the weight of $x_{12}x_{24}$ is $0 + 4 = 4$ and the weight of $-x_{14}x_{22}$ is $0 + 2 = 2$. So the weight of $P_{24}$ is $2$ and the initial term is $\init_{M}(P_{24}) = -x_{14}x_{22}$. We record $\init_{M}(P_{24})$ as a $2 \times 1$ tableau with entries $4,2$. Continuing in this way, we write down the single column tableaux arising from the matching field above:
\[
\begin{tabular}{|c|} \hline 1 \\ \hline \multicolumn{1}{c}{\,} \\ \multicolumn{1}{c}{\,} \\ \end{tabular}\ \  \begin{tabular}{|c|} \hline 2 \\ \hline \multicolumn{1}{c}{\,} \\ \multicolumn{1}{c}{\,} \\ \end{tabular}\ \ 
\begin{tabular}{|c|} \hline 3 \\ \hline \multicolumn{1}{c}{\,} \\ \multicolumn{1}{c}{\,} \\ \end{tabular}\ \ 
\begin{tabular}{|c|} \hline 4 \\ \hline \multicolumn{1}{c}{\,} \\ \multicolumn{1}{c}{\,} \\ \end{tabular}\ \quad
\begin{tabular}{|c|} \hline 1 \\ \hline 2 \\ \hline \multicolumn{1}{c}{\,} \\ \end{tabular}\ \  
\begin{tabular}{|c|} \hline 1 \\ \hline 3 \\ \hline \multicolumn{1}{c}{\,} \\ \end{tabular}\ \  
\begin{tabular}{|c|} \hline 4 \\ \hline 1 \\ \hline \multicolumn{1}{c}{\,} \\ \end{tabular}\ \  
\begin{tabular}{|c|} \hline 2 \\ \hline 3 \\ \hline \multicolumn{1}{c}{\,} \\ \end{tabular}\ \  
\begin{tabular}{|c|} \hline 4 \\ \hline 2 \\ \hline \multicolumn{1}{c}{\,} \\ \end{tabular}\ \  
\begin{tabular}{|c|} \hline 4 \\ \hline 3 \\ \hline \multicolumn{1}{c}{\,} \\ \end{tabular}\ \quad
\begin{tabular}{|c|} \hline 1 \\ \hline 2 \\ \hline 3 \\ \hline \end{tabular}\ \ 
\begin{tabular}{|c|} \hline 1 \\ \hline 2 \\ \hline 4 \\ \hline \end{tabular}\ \ 
\begin{tabular}{|c|} \hline 1 \\ \hline 3 \\ \hline 4 \\ \hline \end{tabular}\ \ 
\begin{tabular}{|c|} \hline 2 \\ \hline 3 \\ \hline 4 \\ \hline \end{tabular}\ .
\]

\end{example}

\begin{definition}\label{def:matching_field_weight}
Let $\Lambda$ be a coherent matching field induced by the matrix $M$. We define ${\bf w}_{\Lambda}$ to be the \emph{weight vector} induced by $M$ on the Pl\"ucker variables. That is, the component of ${\bf w}_{\Lambda}$ corresponding to the variable $P_I$ is the minimum weight of monomials appearing in $\phi_{\Lambda}$ with respect to $M$. The weight of a monomial is the sum of the corresponding terms in the weight matrix $M$. For ease of notation we write ${\bf P}^\alpha$ for the monomial $P_{I_1}^{\alpha_1} \dots P_{I_s}^{\alpha_s}$ where $\alpha = (\alpha_1, \dots, \alpha_s)$. And so the weight of ${\bf P}^\alpha$ is simply $\alpha \cdot {\bf w}_{\Lambda} $.
Given a coherent matching field $\Lambda$, we denote 
\emph{the initial ideal of $I_n$ with respect to $\wb_\Lambda$} by $\inw{\Lambda}(I_n)$.
\end{definition}

\begin{example}\label{example:diagonal_matching_field}
Consider the matching field $\Lambda_4$ from Example~\ref{example:coherent}. The Pl\"ucker variables are $P_1, P_2, P_3, P_4, P_{12}, P_{13}, P_{41}, P_{23}, P_{42}, P_{43}, P_{123}, P_{124}, P_{134}, P_{234}$ where, by convention, we order the entries of the indices according to initial terms of the corresponding Pl\"ucker form. The weight vector $\wb_{\Lambda_4}$ with the variables in this order is given by $$\wb_{\Lambda_4} = (0,0,0,0,2,1,3,1,2,1,6,4,3,3).$$ 
For instance, to obtain the weight of the Pl\"ucker variable $P_{124}$ we note that the initial term of the Pl\"ucker form is $\init{M}(P_{124}) = x_{11}x_{22}x_{34}$. We then sum the corresponding entries on $M$, i.e. the leading diagonal of the submatrix of $M$ consisting of the first three rows and the first, second and fourth columns. 

Computing the ideal $\inw{\Lambda_4}(I_4)$ in $\mathtt{Macaulay2}$ \cite{M2}, we obtain the following generating set,
\begin{multline*}
    \inw{\Lambda_4}(I_4) = \langle 
    P_{23} P_{134} - P_{13} P_{234}, 
    P_{43} P_{124} - P_{42} P_{134}, 
    P_{23} P_{124} - P_{12} P_{234},
    P_{13} P_{124} - P_{12} P_{134}, \\ 
    P_{13} P_{43}  - P_{12} P_{43}, 
    P_{2}  P_{134} - P_{1}  P_{234}, 
    P_{4}  P_{23} +  P_{2}  P_{43},
    P_{4}  P_{13} +  P_{1}  P_{43},
    P_{2}  P_{13} -  P_{1}  P_{23},
    P_{4}  P_{12} +  P_{1}  P_{42}
\rangle \, .
\end{multline*}

Let $\Lambda_3$ be the matching field $\Lambda_4$ restricted to the subsets of $\{1,2,3 \}$. The single column tableaux arising from $\Lambda_3$ are exactly those for $\Lambda_4$ which do not contain $4$. The weight vector is therefore $\wb_{\Lambda_3} = (0,0,0,2,1,1)$. 
The initial ideal with respect to $\wb_{\Lambda_3}$ is given by,
\[
\inw{\Lambda_3}(I_3) = \langle P_2 P_{13} - P_1 P_{23} \rangle = \inw{\Lambda_4}(I_4) \cap \mathbb{K}[P_I: \varnothing \neq I \subsetneq \{1,2,3 \}].
\]

Note that for $\Lambda_3$, the entries in each column of each tableau are strictly increasing. We call such matching field \emph{diagonal}. For diagonal matching fields, their corresponding Gr\"obner cones are related to Gelfand-Cetlin polytopes.
\end{example}

\noindent{\bf 1.4. Block diagonal matching fields.} 
Given $n$ and $0\leq\ell<n$, we define the block diagonal matching field denoted by $B_\ell=(1\cdots \ell|\ell+1\cdots n)$
as the map $B_\ell:\mathbb{P}^n\rightarrow S_n$ with
\[
 B_\ell(I)= \left\{
     \begin{array}{@{}l@{\thinspace}l}
      id  &: \text{if $\lvert I|=1$ or $\lvert I \cap \{1,\ldots,\ell\}\rvert \ge 2$},\\
      (12)  &: \text{otherwise}. \\
     \end{array}
   \right.
\]
These matching fields are called $2$-block diagonal in \cite{KristinFatemeh}. Each matching field $B_\ell$ is induced by the weight matrix:
\[
M_{\ell}=\begin{bmatrix}
    0   & 0 & \cdots & 0&  0& 0& \cdots & 0  \\
\ell&\ell-1&\cdots&1&n&n-1&\cdots&\ell+1\\
    2n   & 2(n-1) & \cdots & 10&  8& 6&4 & 2  \\
    \cdots  & \cdots  & \cdots &  \cdots & \cdots  & \cdots &  \cdots  &  \cdots   \\
     (n-1)n& (n-1)^2& \cdots   & 5(n-1)& 4(n-1)  & 3(n-1) & 2(n-1)  & n-1    \\
\end{bmatrix}.
\]
Therefore, all block diagonal matching fields are coherent. We note that if $\ell = 0$ or $\ell = n$ then the matching field is diagonal, see Example~\ref{example:diagonal_matching_field}. 
We denote by $\wb_{\ell}$ the weight vector $\wb_{B_{\ell}}$, see Definition~\ref{def:matching_field_weight}.

\begin{example}
For $n = 3$, we calculate the weight vector $\wb_{\ell}$ and matching field ideals for each block diagonal matching field $B_{\ell}$. The Pl\"ucker ideal is given by $I_3 = \langle P_1 P_{23} - P_2 P_{13} + P_3 P_{12} \rangle $. The weight matrices are given by,
\[
M_0 = 
\begin{bmatrix}
0  &0  &0  \\
3  &2  &1  \\
6  &4  &2  \\
\end{bmatrix} \, ,
\quad 
M_1 =
\begin{bmatrix}
0  &0  &0  \\
1  &3  &2  \\
6  &4  &2  \\
\end{bmatrix} \, ,
\quad 
M_2 = 
\begin{bmatrix}
0  &0  &0  \\
2  &1  &3  \\
6  &4  &2  \\
\end{bmatrix} \, .
\quad 
\]

For each matching field we can write down the weight vector and initial ideal,

\[
 \resizebox{8cm}{!}{\begin{tabular}{|c|c|c|}
    \hline
    $\ell$  & \begin{tabular}{c} Weight vector $\wb_{\ell}$ for \\ $P_1, P_2, P_3, P_{12}, P_{13}, P_{23}$   \end{tabular} & Ideal $\inwb(I_3)$ \\
    \hline
    0 & $(0,0,0,2,1,1)$ & $\langle P_1 P_{23} - P_2 P_{13} \rangle$ \\
    1   & $(0,0,0,1,1,2)$ & $\langle P_3 P_{12} - P_2 P_{13} \rangle$ \\
    2   & $(0,0,0,1,2,1)$ & $\langle P_1 P_{23} + P_3 P_{12} \rangle$ \\
    \hline
\end{tabular}\, .
}\]

\end{example}

\section{Matching field ideals}\label{sec:tableaux}
In this section, we examine a generating set of block diagonal matching field ideals. In particular, using the combinatorics of matching field tableaux we show that all these ideals are quadratically generated. Notice that, for a general ideal, providing a bound for the degree of its generators is a difficult problem. Such questions are usually studied for  special classes of ideals which are define combinatorially, see e.g.~\cite{White, hibi1987distributive, Ohsugi, ene2011monomial, Cone, mohammadi2010weakly}. 
\begin{figure}
    \centering
    $T = $
    \begin{tabular}{c|c|c|c|c|c}
        \hline
        \multicolumn{1}{|c}{1}  & \multicolumn{1}{|c|}{4}    & 2                         & 5                        & 2                          & \multicolumn{1}{c|}{4}\\
        \cline{1-6}
        \multicolumn{1}{c}{\,}  & \multicolumn{1}{|c|}{1}    & 3                         & 2                        & 3                          & \multicolumn{1}{c|}{2}\\
        \cline{2-6}
        \multicolumn{1}{c}{\,}  & \multicolumn{1}{|c|}{6}    & \multicolumn{1}{c|}{5}    & \multicolumn{1}{c|}{\,}  & \multicolumn{1}{c|}{4}    & \multicolumn{1}{c|}{5} \\
        \cline{2-3} \cline{5-6}
        \multicolumn{1}{c}{\,}  & \multicolumn{1}{c|}{\,}     & \multicolumn{1}{c|}{6}   & \multicolumn{1}{c|}{\,}  & \multicolumn{1}{c|}{5}    & \multicolumn{1}{c|}{6} \\
        \cline{3-3} \cline{5-6}
    \end{tabular}\, ,
    \quad $T' = $
    \begin{tabular}{c|c|c|c||c|c}
        \multicolumn{4}{c}{$X$} & \multicolumn{2}{c}{$Y$}\\
        \hline
        \multicolumn{1}{|c|}{1}  & 2                         & 2                         & 5                        & 4                         & \multicolumn{1}{c|}{4}  \\
        \cline{1-6}
        \multicolumn{1}{|c|}{2}  & 3                         & \multicolumn{1}{c|}{3}    & \multicolumn{1}{c|}{\,}  & 1                         & \multicolumn{1}{c|}{2}  \\
        \cline{1-3} \cline{5-6}
        \multicolumn{1}{|c|}{4}  & \multicolumn{1}{c|}{5}    & \multicolumn{1}{c}{\,}    & \multicolumn{1}{c|}{\,}  & 5                         & \multicolumn{1}{c|}{6}  \\
        \cline{1-2} \cline{5-6}
        \multicolumn{1}{|c|}{5}  & \multicolumn{1}{c|}{6}    & \multicolumn{1}{c}{\,}    & \multicolumn{1}{c|}{\,}  & \multicolumn{1}{c|}{6}    &                         \\
        \cline{1-2} \cline{5-5}
    \end{tabular}\, ,
    \quad $T'' = $
    \begin{tabular}{c|c|c|c|c|c}
        \hline
        \multicolumn{1}{|c|}{1}  & 1                         & 1                         & 4                         & 5                         & \multicolumn{1}{c|}{6}  \\
        \cline{1-6}
        \multicolumn{1}{|c|}{2}  & 3                         & 3                         & \multicolumn{1}{c|}{6}    & \multicolumn{1}{c}{\,}    &   \\
        \cline{1-4}
        \multicolumn{1}{|c|}{3}  & 4                         & \multicolumn{1}{c|}{5}    & \multicolumn{1}{c}{\,}    & \multicolumn{1}{c}{\,}    &   \\
        \cline{1-3}
        \multicolumn{1}{|c|}{5}  & \multicolumn{1}{c}{\,}    & \multicolumn{1}{c}{\,}    & \multicolumn{1}{c}{\,}    & \multicolumn{1}{c}{\,}    &   \\
        \cline{1-1} 
    \end{tabular}
    \caption{Three examples of tableaux for $\Flag_6$ and block diagonal matching field $B = (123|456)$. The tableau $T''$ is in semi-standard form as the entries of each row are weakly increasing and the entries of each column are strictly increasing. The tableaux $T$ and $T'$ are row-wise equal and $T'$ has been partitioned into two parts $X$ and $Y$. For each column $I \in X$ we have $B(I) = id$ and for each $I \in Y$ we have $B(I) = (12)$. Note that $X$ and $Y$ have weakly increasing rows, in particular $X$ is in semi-standard form.}
    \label{fig:tableau_exmple}
\end{figure}

\begin{definition}\label{def:swap}
 Suppose that $T$ and $T'$ are two tableaux that are row-wise equal. 
We say that $T$ and $T'$ differ by a {\em swap}, or $T'$ is obtained from $T$ by a {\em quadratic relation}, if $T$ and $T'$ are the same for all but two columns. If two tableaux differ by a sequence of swaps then we say that the tableaux are {\em quadratically equivalent}.
\end{definition}

Given an arbitrary matching field $\Lambda$ and two row-wise equal tableaux we cannot guarantee that the tableaux are quadratically equivalent, see Example~\ref{example:not_quad_equiv}.
However in the case of block diagonal matching fields, row-wise equal does imply quadratically equivalent. So for example in Figure~\ref{fig:tableau_exmple}, it is possible to find a sequence of quadratic relations from $T$ to $T'$.

\begin{example}\label{example:not_quad_equiv}
Fix $n=6$ and suppose we have a matching field $\Lambda$ such that the only subsets $I$ of $\{1, \dots, 6 \}$ for which $\Lambda(I) = id$ are those which appear in $T$ and $T'$ where,
\[
T = 
\begin{tabular}{|c|c|c|}
    \hline
    1 & 2 & 3  \\
    \hline
    4 & 5 & 6  \\
    \hline
\end{tabular}\, ,
\quad T' = 
\begin{tabular}{|c|c|c|}
    \hline
    3 & 1 & 2  \\
    \hline
    4 & 5 & 6  \\
    \hline
\end{tabular}
\ .
\]
The tableaux $T$ and $T'$ are row-wise equal, however it is not possible to apply a quadratic relation to either tableau. Hence $T$ and $T'$ are not quadratically equivalent.
\end{example}

\begin{theorem}\label{prop:quad}
The ideals of block diagonal matching fields are  quadratically generated.
\end{theorem}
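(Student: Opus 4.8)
The plan is to show that $J_\Lambda$ for a block diagonal matching field $\Lambda = B_\ell = (1\cdots\ell\mid\ell+1\cdots n)$ is generated by the quadratic binomials $P^T - P^S$ associated to pairs of tableaux that differ by a single swap. Since $J_\Lambda$ is by definition generated by all binomials $P^T - P^S$ with $T, S$ row-wise equal, the key is the claim flagged in the text immediately before the theorem: for block diagonal matching fields, \emph{row-wise equal implies quadratically equivalent}. Granting that claim, any binomial generator $P^T - P^S$ telescopes as a sum $\sum_i (P^{T_i} - P^{T_{i+1}})$ along a chain of swaps, and each summand is (up to a monomial multiple) one of the quadratic binomials; this is the standard telescoping argument that reduces "toric ideal generated by all row-relations" to "generated by quadratic row-relations", and it is routine once quadratic equivalence is in hand.

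So the substance is the combinatorial claim. First I would reduce to the two-column case: if $T$ and $T'$ are row-wise equal, it suffices to transform $T$ into $T'$ by repeatedly picking two columns of $T$ whose multiset union does not yet match the corresponding two columns of $T'$ and rearranging just those two columns. Thus I must show: given two pairs of columns $(I_1, I_2)$ and $(I_1', I_2')$ with $B(I_1) \cdot$-ordered entries etc., which are row-wise equal as $2$-column tableaux and with $B(I_j), B(I_j')$ the prescribed orderings, one pair can be reached from the other by swaps — but in the two-column situation a "swap" is exactly the move we are allowed, so what really needs proving is that the \emph{sorting operator} works: the key structural fact (to be extracted from the block diagonal shape and its inducing matrix $M_\ell$) is that any row-wise equal pair can be reached from the "sorted" pair, where sorted means the two columns are comparable and the entries in corresponding rows are sorted into weakly increasing order across the two columns. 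Concretely, I would argue that given any $2$-column tableau $U$ with columns $I_1, I_2$ (each ordered by $B_\ell$), there is a canonically sorted $2$-column tableau $\mathrm{sort}(U)$, row-wise equal to $U$, reachable by a sequence of swaps; applying this to both $T$'s two columns and to $T'$'s two columns and noting $\mathrm{sort}$ depends only on the row-content gives $T \sim \mathrm{sort} \sim T'$.

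The step I expect to be the main obstacle is establishing that the sorting moves are \emph{legal} — i.e. that after performing an exchange of entries between two columns, the resulting columns are still columns in $\Lambda$ with the $B_\ell$-induced ordering, so that the intermediate tableaux genuinely lie in the monoid presented by $J_\Lambda$. Every $k$-subset of $[n]$ appears as some column (the matching field is defined on all of $\mathbb{P}^n$), so set-membership is automatic; the real point is bookkeeping the orderings: $B_\ell(I) = \mathrm{id}$ unless $|I| \ge 2$ and $|I \cap \{1,\dots,\ell\}| \le 1$, in which case the first two entries are transposed. I would handle this by splitting a two-column configuration according to how many of the four "top slots" are $\le \ell$, and check in each case — using that the $X$-part (columns with $B(I)=\mathrm{id}$) and the $Y$-part (columns with $B(I)=(12)$) each have weakly increasing rows, exactly as illustrated in Figure~\ref{fig:tableau_exmple} — that the naive sorting move either preserves the partition into $X$ and $Y$ or moves a column across it in a controlled way, and that in all cases the new columns carry the correct $B_\ell$-ordering. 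Once this case analysis is complete the telescoping wraps up the proof; Example~\ref{example:not_quad_equiv} shows why it genuinely uses the block diagonal hypothesis and cannot be expected for arbitrary $\Lambda$.
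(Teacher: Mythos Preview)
Your approach differs from the paper's, and the reduction step has a real gap.

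The paper does not attempt a two-column sorting argument. Instead it (i) first matches up the size-one columns of $T$ and $T'$ by an explicit induction---at each stage locating a concrete swap that strictly shrinks the residual difference $\overline{T}$---and then (ii) for tableaux with no size-one columns, pads every short column with dummy entries $n+1,\dots,2n$ to make both tableaux rectangular, applies the already-established Grassmannian result \cite[Theorem~4.1]{OllieFatemeh} for the block diagonal matching field on $[2n]$, and strips the dummies from the resulting chain of swaps. The block-diagonal hypothesis is used only through that cited Grassmannian theorem and through the case analysis in step~(i).

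The gap in your proposal is the sentence ``it suffices to transform $T$ into $T'$ by repeatedly picking two columns of $T$ whose multiset union does not yet match the corresponding two columns of $T'$ and rearranging just those two columns.'' There is no notion of ``the corresponding two columns of $T'$'' (the columns form a multiset), and no reason such a move makes progress; Example~\ref{example:not_quad_equiv} shows exactly this kind of greedy matching can stall. What you would actually need is a global canonical form reachable from every tableau by repeated two-column sorts, together with a termination argument (some well-founded statistic strictly decreases) and a confluence argument (the terminal form depends only on row content). Your $\mathrm{sort}$ operator addresses neither: you define it only on two-column tableaux---where, by Definition~\ref{def:swap}, any two row-wise equal tableaux already differ by a single swap, so that case is vacuous---and never say what decreases when you iterate it on a multi-column tableau, nor why the limit is unique. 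The case analysis you outline on the $X/Y$ partition is relevant input, but by itself it does not supply the missing global straightening.
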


\begin{proof}
Suppose that $T$ and $T'$ are row-wise equal tableaux.
The first step is to show that there is a sequence of quadratic relations after which $T$ and $T'$ have the same columns of size $1$. We define $P_T$ to be the sub-tableau of $T$ which contains all columns of $T$ but only the first two rows. Similarly we define $P_{T'}$. We form a sub-tableau $\overline{T}$ of $T$ and $\overline{T'}$ of $T'$ as follows. Initially set $\overline{T} = T$ and $\overline{T'} = T'$. For each column $C$ in $P_T$, we check whether there is a column $C'$ in $P_{T'}$ with the same entries. If $C'$ exists, then we remove the column in $\overline{T}$ corresponding to $C$ and remove the column in $\overline{T'}$ corresponding to $C'$. If $\overline{T}$ and $\overline{T'}$ are empty, then the columns of size $1$ in $T$ and $T'$ are equal. 

Assuming that the columns of size $1$ in $T$ and $T'$ are different, we will show that there is always a swap which reduces the size of $\overline{T}$.

We write $a_1 \le \dots \le a_t$ for the entries of the columns of size $1$ in $\overline{T}$ and $b_1 \le \dots \le b_t$ for the entries of the columns of size $1$ in $\overline{T'}$. Suppose that $s$ is the largest index such that $a_s\neq b_s$. Without loss of generality we assume that $a_s < b_s$. Suppose that $a_s, b_s \in B_2$, then there is a column $C = (b_s, c_2, \dots, c_r)^{tr}$ in $\overline{T}$. We may swap $a_s$ and $b_s$ in $T$. Now there is a new column $(b_s)$ of size one in $T$ which appears in $T'$. So we have reduced the number of size one columns in $\overline{T}$ and $\overline{T'}$.
We may now assume that $a_s \in B_1$. Since the first row of $\overline{T}$ and $\overline{T'}$ are equal as multisets, $a_s$ appears in a column $C = (a_s,c_2,c_3 \dots, c_r)^{tr}$ in $\overline{T'}$, where $r \ge 2$. Since $a_s \in B_1$ it follows that $c_2 \in B_1$. Since the second rows of $\overline{T}$ and $\overline{T'}$ are equal as multisets, it follows that there is a column $D = (d_1, c_2, d_3, \dots, d_t)^{tr}$ in $\overline{T}$, for some $t \ge 2$. By assumption we have that $C \in \overline{T}$ and $D \in \overline{T}'$. Therefore $d_1 \neq a_s$. We may swap $a_s$ and $d_1$ in $T$. Since the pair $(a_s, c_2)^{tr}$ appears in $P_{T'}$ we have reduced the number of columns in $\overline{T}$ and $\overline{T'}$. Since there are finitely many columns in $\overline{T}$, it follows that there is a sequence of swaps which makes the columns of size one in $T$ and $T'$ equal.

To complete the proof, by the first step, we may assume that $T$ and $T'$ do not contain any columns of size one. We now construct tableaux $S$ and $S'$ by filling the columns of $T$ and $T'$ which have size less than $n$ as follows. Suppose we have an empty space in $T$ and $T'$ in row $i$, then we fill this space with the entry $n+i$. We do this for all spaces to obtain a rectangular tableau with $n$ rows whose entries lie in $\{1,\ldots,2n\}$. We have that $S$ and $S'$ are valid tableaux for the block diagonal matching field $B' = (B_1, B_2 \cup \{n+1, \dots, 2n \})$. Since $S$ and $S'$ are row-wise equal, by the Grassmannian case \cite[Theorem 4.1]{OllieFatemeh}, there is a sequence $S = S_1, S_2, \dots, S_m = S'$  of quadratic relations from $S$ to $S'$. For each $i$, we remove the entries in the tableau $S_i$ lying in $\{n+1, \dots, 2n \}$ to obtain a sequence of quadratic relation from $T$ to $T'$, which completes the proof.
\end{proof}


\section{Matching field algebras}\label{sec:flag}
In this section, we show that the set of Pl\"ucker forms is a SAGBI basis for the Pl\"ucker algebra with respect to the weight vectors induced by block diagonal matching fields. More precisely, we determine a family of $2$-column matching field tableaux which are in bijection with the set of $2$-column semi-standard tableaux. Note that a tableau is in {\em semi-standard form} if the entries of each row are weakly increasing and the entries of each column are strictly increasing, see for example Figure~\ref{fig:tableau_exmple}.

First, we recall the definition of SAGBI basis
from \cite{robbiano1990subalgebra} in our setting. 

\begin{definition}\label{sagbi}
Let $\mathcal{A}_{n}$ be the Pl\"ucker algebra of $\Flag_n$ and let $A_{\ell} = \mathbb{K}[\init_{M_\ell}(P_I) : \varnothing \neq I \subsetneq [n]]$ be the \emph{algebra} of $B_\ell$. The set of Pl\"ucker forms $\{P_I: \varnothing \neq I\subsetneq [n]\} \subset \mathbb{K}[x_{ij}]$ is a SAGBI basis for $\mathcal{A}_{n}$ with respect to the weight vector $\wb_\ell$ if and only if for each $I \subset [n]$, the initial form $\inwb(P_I) $ is a monomial and
$\inwb(\mathcal{A}_{n})=A_\ell$. 

Similarly, denoting $\mathcal{A}^k_{n}$ and $A_{\ell}^k$ for the Pl\"ucker algebra of $\Gr(k,n)$ and the algebra of $B_\ell$ restricted to Pl\"ucker variables of size $k$, we say that the set $\{P_I: |I|=k\}$ is a SAGBI basis for $\mathcal{A}^k_{n}$ if and only if the initial forms $\inwb(P_I)$ are monomials and
$\inwb(\mathcal{A}^k_{n})=A_\ell^k$. 
\end{definition}

\noindent{\bf 4.1. Initial algebras of $\Gr(k,n)$.} 
First we recall the following definition from \cite{OllieFatemeh}.

\begin{definition}\label{def:basis_gr}
Fix $\Gr(k,n)$ and $B_{\ell} = (B_1 | B_2)$ a matching field. Let $\Tc^k_{\ell}$ be the collection of all tableaux $T$ which follow. We partition $\Tc^k_{\ell}$ into types, each of which is described below. We also define a map $S^k$ taking each tableau $T \in \Tc^k_{\ell}$ to a semi-standard tableau $S^k(T)$. Note that $S^k(T)$ does not necessarily lie in $\Tc^k_{\ell}$. We write,
\[
I = \{i_1 < i_2 < \dots < i_k\} \text{ and } 
J = \{j_1 < j_2 < \dots < j_k\}.
\]

\textbf{Type 1.}
\[
T = 
\begin{tabular}{|c|c|}
    \hline
    $i_1$     & $j_1$       \\ \hline
    $i_2$     & $j_2$       \\ \hline
    $\vdots$  & $\vdots$    \\ \hline
    $i_k$     & $j_k$       \\
    \hline
\end{tabular} \quad
\begin{tabular}{l}
    Where, \\
    $i_1 \le j_1, i_2 \le j_2, \dots, i_k \le j_k$.
\end{tabular} \quad
S^k(T) = 
\begin{tabular}{|c|c|}
    \hline
    $i_1$     & $j_1$       \\ \hline
    $i_2$     & $j_2$       \\ \hline
    $\vdots$  & $\vdots$    \\ \hline
    $i_k$     & $j_k$       \\
    \hline
\end{tabular}\ .
\]

\medskip

\textbf{Type 2.}
\[
T = 
\begin{tabular}{|c|c|}
    \hline
    $i_2$     & $j_2$       \\ \hline
    $i_1$     & $j_1$       \\ \hline
    $i_3$     & $j_3$       \\ \hline
    $\vdots$  & $\vdots$    \\ \hline
    $i_k$     & $j_k$       \\ 
    \hline
\end{tabular} \quad
\begin{tabular}{l}
    Where, \\
    $i_1 \le j_1, i_2 \le j_2, \dots, i_k \le j_k$.
\end{tabular}\quad
S^k(T) = 
\begin{tabular}{|c|c|}
    \hline
    $i_1$     & $j_1$       \\ \hline
    $i_2$     & $j_2$       \\ \hline
    $i_3$     & $j_3$       \\ \hline
    $\vdots$  & $\vdots$    \\ \hline
    $i_k$     & $j_k$       \\
    \hline
\end{tabular}.
\]

\medskip

\textbf{Type 3A.}
\[
T =
\begin{tabular}{|c|c|}
    \hline
    $i_2$     & $j_1$       \\ \hline
    \textcolor{blue}{$i_1$}     & $j_2$       \\ \hline
    $i_3$     & $j_3$       \\ \hline
    $\vdots$  & $\vdots$    \\ \hline
    $i_k$     & $j_k$       \\
    \hline
\end{tabular} \quad
\begin{tabular}{l}
    Where, \\
    \textcolor{blue}{$i_1 \in B_1$},\\
    $i_2, \dots, i_k, j_1 \dots, j_k \in B_2$,\\
    $i_2 \le j_1$,\\
    $i_3 \le j_3, \dots, i_k \le j_k$.
\end{tabular} \quad
S^k(T) = 
\begin{tabular}{|c|c|}
    \hline
    \textcolor{blue}{$i_1$}     & $j_1$       \\ \hline
    $i_2$     & $j_2$       \\ \hline
    $i_3$     & $j_3$       \\ \hline
    $\vdots$  & $\vdots$    \\ \hline
    $i_k$     & $j_k$       \\
    \hline
\end{tabular}\ .
\]

\medskip

\textbf{Type 3B(r).}\label{page:3B}
\[
T = 
\begin{tabular}{|c|c|}
    \hline
    $i_2$           & $j_1$       \\ \hline
    \textcolor{blue}{$i_1$}           & $j_2$       \\ \hline
    $i_3$           & $j_3$       \\ \hline
    $\vdots$        & $\vdots$    \\ \hline
    $i_{r-1}$       & $j_{r-1}$   \\ \hline
    $i_{r}$         & $j_{r}$     \\ \hline
    $i_{r+1}$       & $j_{r+1}$   \\ \hline
    $\vdots$        & $\vdots$    \\ \hline
    $i_k$           & $j_k$       \\
    \hline
\end{tabular} \quad
\begin{tabular}{l}
    Where, \\
    \textcolor{blue}{$i_1 \in B_1$}, \\
    $i_2, \dots, i_k, j_1 \dots, j_k \in B_2$,\\
    $j_1 < j_2 \le i_2$,\\
    $r = \min\{t \ge 2 :j_{t+1} > i_{t}\}$,\\
    $i_3 > j_3, \dots, i_r > j_r$,  \\
    $i_{r+1} \le j_{r+1}, \dots, i_k \le j_k$. \\
\end{tabular}\quad
S^k(T) = 
\begin{tabular}{|c|c|}
    \hline
    \textcolor{blue}{$i_1$}           & $j_1$       \\ \hline
    $j_2$           & $i_2$       \\ \hline
    $j_3$           & $i_3$       \\ \hline
    $\vdots$        & $\vdots$    \\ \hline
    $j_{r-1}$       & $i_{r-1}$   \\ \hline
    $j_{r}$         & $i_{r}$     \\ \hline
    $i_{r+1}$       & $j_{r+1}$   \\ \hline
    $\vdots$        & $\vdots$    \\ \hline
    $i_k$           & $j_k$       \\
    \hline
\end{tabular}\ .
\]

\medskip

\textbf{Type 3C(s).}\label{page:3C}
\[
T = 
\begin{tabular}{|c|c|}
    \hline
    $i_2$           & \textcolor{blue}{$j_1$}       \\ \hline
    \textcolor{blue}{$i_1$}           & \textcolor{blue}{$j_2$}       \\ \hline
    $i_3$           & \textcolor{blue}{$j_3$}       \\ \hline
    $\vdots$        & \textcolor{blue}{$\vdots$}    \\ \hline
    $i_{s}$         & \textcolor{blue}{$j_{s}$}     \\ \hline
    $i_{s+1}$       & $j_{s+1}$   \\ \hline
    $\vdots$        & $\vdots$    \\ \hline
    $i_k$           & $j_k$       \\
    \hline
\end{tabular} \quad
\begin{tabular}{l}
    Where, \\
    \textcolor{blue}{$i_1, j_1, j_2, \dots, j_s \in B_1$},\\
    $i_2, \dots, i_k, j_{s+1} \dots, j_k \in B_2$,\\
    $i_1 \le j_1 < j_2$,\\
    $i_{s+1} \le j_{s+1}, \dots, i_k \le j_k$.
\end{tabular} \quad
S^k(T) = 
\begin{tabular}{|c|c|}
    \hline
    \textcolor{blue}{$i_1$}           & \textcolor{blue}{$j_1$}       \\ \hline
    \textcolor{blue}{$j_2$}           & $i_2$       \\ \hline
    \textcolor{blue}{$j_3$}           & $i_3$       \\ \hline
    \textcolor{blue}{$\vdots$}        & $\vdots$    \\ \hline
    \textcolor{blue}{$j_{s}$}         & $i_{s}$     \\ \hline
    $i_{s+1}$       & $j_{s+1}$   \\ \hline
    $\vdots$        & $\vdots$    \\ \hline
    $i_k$           & $j_k$       \\
    \hline
\end{tabular}\ .
\]

\medskip

\textbf{Type 3D(s).}\label{page:3D}
\[
T = 
\begin{tabular}{|c|c|}
    \hline
    $i_2$           & \textcolor{blue}{$j_1$}       \\ \hline
    \textcolor{blue}{$i_1$}           & \textcolor{blue}{$j_2$}       \\ \hline
    $i_3$           & \textcolor{blue}{$j_3$}       \\ \hline
    $\vdots$        & \textcolor{blue}{$\vdots$}    \\ \hline
    $i_{s}$         & \textcolor{blue}{$j_{s}$}     \\ \hline
    $i_{s+1}$       & $j_{s+1}$   \\ \hline
    $\vdots$        & $\vdots$    \\ \hline
    $i_k$           & $j_k$       \\
    \hline
\end{tabular} \quad
\begin{tabular}{l}
    Where, \\
    \textcolor{blue}{$i_1, j_1, j_2, \dots, j_s \in B_1$},\\
    $i_2, \dots, i_k, j_{s+1} \dots, j_k \in B_2$,\\
    $i_1 \ge j_2$,\\
    $i_{s+1} \le j_{s+1}, \dots, i_k \le j_k$.
\end{tabular} \quad
S^k(T) = 
\begin{tabular}{|c|c|}
    \hline
    \textcolor{blue}{$i_1$}           & \textcolor{blue}{$j_1$}       \\ \hline
    \textcolor{blue}{$j_2$}           & $i_2$       \\ \hline
    \textcolor{blue}{$j_3$}           & $i_3$       \\ \hline
    \textcolor{blue}{$\vdots$}        & $\vdots$    \\ \hline
    \textcolor{blue}{$j_{s}$}         & $i_{s}$     \\ \hline
    $i_{s+1}$       & $j_{s+1}$   \\ \hline
    $\vdots$        & $\vdots$    \\ \hline
    $i_k$           & $j_k$       \\
    \hline
\end{tabular}\ .
\]
\end{definition}

The following theorem shows that the collection of tableaux in Definition~\ref{def:basis_gr} forms a SAGBI basis for the Pl\"ucker algebra, see \cite[Theorem~4.3]{OllieFatemeh}.
\begin{theorem}\label{thm:basis_gr}
For each pair of integers $k$ and $\ell$, we have that $\inwb(\mathcal{A}_{n}^k)=A_{\ell}^k$. Moreover, following the notation of Definition~\ref{def:basis_gr}, the set $\Tc^k_{\ell}$ of tableaux
is a basis for the degree two subspace of $A_{\ell}^k$ denoted by $[A_{\ell}^k]_2$, and the map $S^k$ is a bijection.
\end{theorem}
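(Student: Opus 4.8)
The plan is to prove Theorem~\ref{thm:basis_gr} by reducing the Grassmannian statement to a careful bookkeeping of the degree-two part, exactly along the lines of the classical SAGBI argument for semi-standard tableaux. First I would recall the standard criterion for SAGBI bases (Robbiano--Sweedler): the set $\{P_I : |I|=k\}$ is a SAGBI basis for $\mathcal{A}_n^k$ with respect to $\wb_\ell$ if and only if every so-called \emph{subduction relation} reduces to zero, equivalently if the initial algebra $A_\ell^k$ is spanned as a $\K$-vector space by the monomials $\inwb(P_T)$ for standard tableaux $T$. So the real content is combinatorial: exhibit an explicit $\K$-basis of $[A_\ell^k]_2$ indexed by the tableaux in $\Tc^k_\ell$, and show that the straightening-type map $S^k$ is a bijection onto the $2$-column semi-standard tableaux, because the latter are known to index a basis of $[\mathcal{A}_n^k]_2$ (the classical Pl\"ucker/Young basis).

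The key steps, in order, are: (1) Fix a term order refining $\wb_\ell$ and observe that $\inwb(P_I)=\mathbf{x}_{B_\ell(I)}$ is a single monomial by coherence of $B_\ell$, so $[A_\ell^k]_2$ is spanned by the products $\inwb(P_I)\inwb(P_J)$ over all unordered pairs; a $2$-column tableau $T$ with columns $I,J$ records such a product together with the matching-field orderings. (2) Show that the tableaux listed in Definition~\ref{def:basis_gr} (Types 1, 2, 3A, 3B$(r)$, 3C$(s)$, 3D$(s)$) are pairwise distinct as monomials in $\K[x_{ij}]$ and that every pair $(I,J)$ falls into exactly one of these types after possibly reordering entries within columns according to $B_\ell$ — this is the exhaustive case analysis driven by which of $i_1,j_1$ lie in $B_1$ versus $B_2$ and the comparisons $i_t \lessgtr j_t$. (3) Verify that the assignment $T \mapsto S^k(T)$ lands in the set of $2$-column semi-standard tableaux (rows weakly increasing, columns strictly increasing), using the defining inequalities of each type; this is a direct check type by type. (4) Construct the inverse: given a $2$-column semi-standard tableau $U$, read off which ``type'' it came from by comparing entries row-by-row and locating where $B_1$ meets $B_2$, and show this recovers a unique $T\in\Tc^k_\ell$; conclude $S^k$ is a bijection. (5) Since $|\Tc^k_\ell| = \dim_\K [\mathcal{A}_n^k]_2$ (equality of cardinalities via the bijection) and the monomials $\inwb(P_T)$, $T\in\Tc^k_\ell$, are linearly independent in $A_\ell^k$ while $\dim_\K [A_\ell^k]_2 \le \dim_\K [\mathcal{A}_n^k]_2$ (initial degeneration can only preserve or... in fact equals, Hilbert function of $A_\ell^k$ equals that of $\mathcal{A}_n^k$ since $A_\ell^k$ is a flat/initial degeneration of $\mathcal{A}_n^k$), the tableaux $\Tc^k_\ell$ form a basis of $[A_\ell^k]_2$, forcing $[\inwb(\mathcal{A}_n^k)]_2 = [A_\ell^k]_2$. (6) Finally, upgrade from degree two to all degrees: because $\mathcal{A}_n^k$ is generated in degree one and its defining ideal $G_{k,n}$ is generated in degree two, the SAGBI condition in degree two suffices — any element of $\mathcal{A}_n^k$ can be subducted using only the quadratic straightening relations, so $\inwb(\mathcal{A}_n^k)=A_\ell^k$ follows by an induction on degree (or by invoking the general principle that a quadratically generated coordinate ring has a SAGBI basis as soon as the degree-two check passes, which is the content of \cite[Theorem~4.3]{OllieFatemeh}).

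I expect the main obstacle to be step (4), producing the inverse of $S^k$: the forward map $S^k$ discards information about which matching-field ordering was applied, and recovering the correct type — especially distinguishing the nested families 3B$(r)$, 3C$(s)$, 3D$(s)$ parametrized by the cut-off indices $r$ and $s$ — requires showing that the defining inequalities of the types partition the semi-standard tableaux cleanly, with no overlaps and no gaps. Concretely one must check that from a semi-standard $U$ one can unambiguously read off whether $i_1\in B_1$ or $B_2$, whether a ``swap'' at the top occurred, and exactly how far down the two columns were interchanged; the subtlety is that the inequalities defining $r$ and $s$ as minima must be matched by reverse inequalities on the $S^k(T)$ side, so that the reconstruction is deterministic. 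Once the bijection is nailed down, the dimension count and the passage from degree two to all degrees are formal, since they only use that $G_{k,n}$ is quadratically generated and that initial algebras preserve Hilbert functions.
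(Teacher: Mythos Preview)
The paper does not prove Theorem~\ref{thm:basis_gr}; it is imported wholesale from \cite[Theorem~4.3]{OllieFatemeh}. So there is no in-paper proof to compare against. Your outline is the natural strategy and is exactly the template the present paper then re-runs for $\Flag_n$ (Lemmas~\ref{lem:basis_fl_span}, \ref{lem:basis_fl_indep}, \ref{lem:basis_fl_biject} and Theorem~\ref{prop:SAGBI}): a spanning lemma, an independence lemma, a bijection with semi-standard tableaux, and a dimension count. In that sense your approach is the same as the one the paper (and its reference) uses.

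Two places where your write-up would need tightening before it is a proof. First, step~(2) is mis-stated: it is \emph{not} true that every ordered pair $(I,J)$, after writing each column in its $B_\ell$-order, lands in one of the listed types. The types in Definition~\ref{def:basis_gr} are canonical representatives, one per row-wise equivalence class, not a partition of all pairs. What you must actually show is (a)~every $2$-column tableau is row-wise equal to some $T\in\Tc^k_\ell$ (spanning), and (b)~no two distinct elements of $\Tc^k_\ell$ are row-wise equal (independence). Your phrase ``pairwise distinct as monomials'' is (b), but your ``every pair falls into exactly one type'' is not (a); (a) genuinely requires producing, for an arbitrary $T_{IJ}$, a swap or sequence of swaps landing in the list.

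Second, step~(5) contains a circularity. You cannot invoke ``$A_\ell^k$ is a flat/initial degeneration of $\mathcal{A}_n^k$'' to get equality of Hilbert functions, because $A_\ell^k=\inwb(\mathcal{A}_n^k)$ is precisely the conclusion you are after. The correct chain is: $A_\ell^k\subseteq\inwb(\mathcal{A}_n^k)$ always (it is generated by the initial terms), and $\dim_\K[\inwb(\mathcal{A}_n^k)]_2=\dim_\K[\mathcal{A}_n^k]_2$ since passing to an initial algebra preserves Hilbert functions. Your bijection $S^k$ then gives $|\Tc^k_\ell|=\dim_\K[\mathcal{A}_n^k]_2$, and independence of $\Tc^k_\ell$ forces $\dim_\K[A_\ell^k]_2\ge|\Tc^k_\ell|$, whence equality throughout. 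With these two fixes your sketch is sound.
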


\noindent{\bf 4.2. Initial algebras of $\Flag_n$.} 

\smallskip 

Extending our results for Grassmannians, we now introduce a family of $2$-column matching field tableaux which are in bijection with the set of $2$-column semi-standard tableaux. 

\medskip

\noindent{\bf Notation.} 
Fix a block diagonal matching field $B_\ell=(1\cdots \ell|\ell+1\cdots n)$. For each collection $\II = \{I_1, \dots, I_r\}$ of non-empty subsets $ I_i \subset [n]$, we denote by $T_{\II}$, or when there are few columns by $T_{I_1, \dots, I_r}$, the tableau with columns $I_i$ for $1 \le i \le r$. The order of elements in column $I_i$ are given by the matching field $B_\ell$.
\begin{definition}\label{def:basis_fl}
Fix $B_{\ell} = (B_1 | B_2)$ a matching field. We define $\Tc_{\ell}$ to be the collection of all tableau $T$ which follow. We also describe the map $S$ which assigns to each $T $ in $ \Tc_{\ell}$ a semi-standard tableau $S(T)$. To help identify tableaux, we assign to each tableau in $\Tc_{\ell}$ a type. We proceed by listing all tableaux $T$ in $\Tc_{\ell}$ by type along with the image of $T$ under $S$. 

By convention for any subsets $I,J,I',J' \subset [n]$, we write,
\[
I = \{i_1 < i_2 < \dots < i_s\}, \quad
J = \{j_1 < j_2 < \dots < j_t\},
\]
\[
I' = \{i'_1 < i'_2 < \dots < i'_{s'}\}, \quad
J' = \{j'_1 < j'_2 < \dots < j'_{t'}\}.
\]

\textbf{Type 1.} For each $1 \le k \le n$ and $T_{IJ}$ in $ \Tc^k_{\ell}$, let $T = T_{IJ}$ and define $S(T) = S^k(T)$. In this case we have $s = t = k$. We subdivide this type of tableau into further types. 
We say that $T $ in $ \Tc_{\ell}$ is of type 1.$X$, where 
$X \in \{1,2,3A,3B(r),3C(s),3D(s): 2 \le r,s \le k \}$ 
if $T$ is of type $X$ in $\Tc^k_{\ell}$ according to Definition~\ref{def:basis_gr}. So, for instance, if $T$ in $ \Tc_{\ell}^k$ is of type $3B(2)$, then we say $T$ is of type $1.3B(2)$ in $\Tc_{\ell}$.
\smallskip

\textbf{Type 2.} Let $s > t \ge 2$ and $T_{I'J'} $ in $ \Tc^{t}_{\ell}$. In this case we have $s' = t' = t$. Suppose $T_{I'J'}$ is of type $X$ in $\Tc^t_{\ell}$ as in Definition~\ref{def:basis_gr}. Let $m = \min\{i'_t, j'_t\}$ and pick any subset $\{i_{t+1}, \dots, i_s \} \subset [n]$ with $m < i_{t+1} < \dots < i_s \le n$. If $i'_t \le j'_t$ then let,
\[
I = I' \cup \{i_{t+1}, \dots, i_s \}, \quad J = J'.
\]
Otherwise if $i'_t > j'_t$ then let,
\[
I = I', \quad J = J' \cup \{i_{t+1}, \dots, i_s \}.
\]
Then let $T = T_{IJ}$. We say that $T$ is of type 2.$X$. Note that $i'_t > j'_t$ only if $T$ is of type 2.3B(t), 2.3C(t) or 2.3D(t). Let $T_{I'' J''} = S^t(T_{I'J'})$ be a tableau in semi-standard form. We set 
\[
S(T) = T_{I'' \cup \{i_{t+1}, \dots, i_{s} \} J''}.
\]

\smallskip

\textbf{Type 3.} Fix $s \ge 2$ and $t = 1$. We partition this type into the following sub-types. 

\textbf{Type 3A(r).} Where $r \in \{1,2\}$. 
\[
T = 
\begin{tabular}{|c|c|}
    \cline{1-2}
    $i_1$ & $j_1$ \\
    \cline{1-2}
    $i_2$ \\
    \cline{1-1}
    $i_3$ \\
    \cline{1-1}
    $\vdots$ \\
    \cline{1-1}
    $i_s$ \\
    \cline{1-1}
\end{tabular},
\quad
\begin{tabular}{l}
    $i_1, i_2 \in B_r$, \\
    $i_1 \le j_1$.
\end{tabular}
\quad
S(T) = T.
\]

\textbf{Type 3B.}
\[
T = 
\begin{tabular}{|c|c|}
    \cline{1-2}
    $i_1$ & $j_1$ \\
    \cline{1-2}
    $i_2$ \\
    \cline{1-1}
    $i_3$ \\
    \cline{1-1}
    $\vdots$ \\
    \cline{1-1}
    $i_s$ \\
    \cline{1-1}
\end{tabular},
\quad
\begin{tabular}{l}
    $i_1, i_2 \in B_2$,\\
    $j_1 \in B_1$.
\end{tabular}
\quad
S(T) = 
\begin{tabular}{|c|c|}
    \cline{1-2}
    $j_1$ & $i_1$ \\
    \cline{1-2}
    $i_2$ \\
    \cline{1-1}
    $i_3$ \\
    \cline{1-1}
    $\vdots$ \\
    \cline{1-1}
    $i_s$ \\
    \cline{1-1}
\end{tabular}.
\]

\textbf{Type 3C.}
\[
T = 
\begin{tabular}{|c|c|}
    \cline{1-2}
    $i_2$ & $j_1$ \\
    \cline{1-2}
    $i_1$ \\
    \cline{1-1}
    $i_3$ \\
    \cline{1-1}
    $\vdots$ \\
    \cline{1-1}
    $i_s$ \\
    \cline{1-1}
\end{tabular},
\quad
\begin{tabular}{l}
    $i_1 \in B_1$,\\
    $j_1, i_2, \dots, i_s \in B_2$,\\
    $i_2 \le j_1$.
\end{tabular}
\quad
S(T) = 
\begin{tabular}{|c|c|}
    \cline{1-2}
    $i_1$ & $j_1$ \\
    \cline{1-2}
    $i_2$ \\
    \cline{1-1}
    $i_3$ \\
    \cline{1-1}
    $\vdots$ \\
    \cline{1-1}
    $i_s$ \\
    \cline{1-1}
\end{tabular}.
\]

\textbf{Type 3D.}
\[
T = 
\begin{tabular}{|c|c|}
    \cline{1-2}
    $i_2$ & $j_1$ \\
    \cline{1-2}
    $i_1$ \\
    \cline{1-1}
    $i_3$ \\
    \cline{1-1}
    $\vdots$ \\
    \cline{1-1}
    $i_s$ \\
    \cline{1-1}
\end{tabular},
\quad
\begin{tabular}{l}
    $i_1, j_1 \in B_1$,\\
    $i_2, \dots, i_s \in B_2$,\\
    $i_1 \le j_1$.
\end{tabular}
\quad
S(T) = 
\begin{tabular}{|c|c|}
    \cline{1-2}
    $i_1$ & $j_1$ \\
    \cline{1-2}
    $i_2$ \\
    \cline{1-1}
    $i_3$ \\
    \cline{1-1}
    $\vdots$ \\
    \cline{1-1}
    $i_s$ \\
    \cline{1-1}
\end{tabular}.
\]
\end{definition}

\begin{example}
Fix the block diagonal matching field $(1|2345678)$. For each $1 \le k \le 7$, the set $\Tc^k_{\ell}$ is included in $\Tc_{\ell}$. These are the tableaux of type 1. So we observe that $T_1$ is a tableau of type 1.B(3) shown below. The tableaux of type 2 in $\Tc_{\ell}$ are built from those of type 1. For example $T_2$ is a tableau of type 2.3B(3) obtained by adding $7,8$ to the left column of $T_1$. 
\[
T_1 = 
\begin{tabular}{|c|c|}
    \hline
    4 & 2 \\ \hline
    1 & 3 \\ \hline
    5 & 4 \\ \hline
    6 & 7 \\
    \hline
\end{tabular}\, ,
\quad T_2 = 
\begin{tabular}{|c|c|}
    \hline
    4 & 2 \\ \hline
    1 & 3 \\ \hline
    5 & 4 \\ \hline
    6 & 7 \\ \hline
    \multicolumn{1}{|c|}{7} \\ \cline{1-1}
    \multicolumn{1}{|c|}{8} \\
    \cline{1-1}
\end{tabular}\, ,
\quad T_3 =
\begin{tabular}{|c|c|}
    \hline
    2 & 1 \\ \hline
    \multicolumn{1}{|c|}{1} \\ \cline{1-1}
    \multicolumn{1}{|c|}{5} \\ \cline{1-1}
    \multicolumn{1}{|c|}{6} \\
    \cline{1-1}
\end{tabular}.
\]

The tableaux of type 3 are those which have exactly one column of size one. The tableau of type 3A are exactly those in  semi-standard form. For the other types, 3B, 3C and 3D, it is not possible to put these tableau into semi-standard form. For example $T_3$ is a tableau of type 3D and we cannot swap the entries in the first row.
\end{example}

\begin{remark}
Notice that the algebra $A_{\ell}$ has the standard grading and any monomial $P_{I_1}P_{I_2} \dots P_{I_t}$ in $A_{\ell}$ is identified with the tableau $T_{I_1 I_2 \dots I_t}$. Moreover, two monomials are equal in $A_{\ell}$ if and only if their corresponding tableaux are row-wise equal.  
\end{remark}

\begin{lemma}\label{lem:basis_fl_span}
The tableaux $\Tc_{\ell}$ span $[A_{\ell}]_2$.
\end{lemma}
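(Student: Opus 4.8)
The plan is to show that the degree-two part $[A_\ell]_2$, which is spanned by all monomials $\init_{M_\ell}(P_I)\init_{M_\ell}(P_J)$ with $\emptyset\neq I,J\subsetneq[n]$ — equivalently by all tableaux $T_{IJ}$ with two columns — is already spanned by the sub-collection $\mathcal{T}_\ell$ described in Definition~\ref{def:basis_fl}. Since $A_\ell$ is standard graded and two monomials coincide in $A_\ell$ exactly when their tableaux are row-wise equal, it suffices to prove: for every pair $(I,J)$ of non-empty proper subsets of $[n]$, the tableau $T_{IJ}$ is row-wise equal (hence equal as an element of $A_\ell$) to a $\ZZ$-linear combination of tableaux lying in $\mathcal{T}_\ell$. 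First I would split into the case $|I|=|J|$ (equal column heights) and the case of unequal heights, say $s=|I|>t=|J|$, reducing the second to the first by the ``type 2 / type 3'' bookkeeping.

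For the equal-height case $|I|=|J|=k\ge 2$, I invoke Theorem~\ref{thm:basis_gr}: the collection $\mathcal{T}^k_\ell$ is a basis of $[A^k_\ell]_2$, and for the standard (diagonal) part this is exactly the classical straightening law for $2$-column semi-standard tableaux. Concretely, given arbitrary $T_{IJ}$ with $|I|=|J|=k$, I run the straightening algorithm using the quadratic Plücker-type relations available in $A_\ell$ (these come from the relations among the $\init_{M_\ell}(P_I)$, which by Theorem~\ref{thm:basis_gr} are governed precisely by the combinatorics encoded in the types $3A$--$3D(s)$ and the map $S^k$) to rewrite $T_{IJ}$ as a $\ZZ$-combination of tableaux in $\mathcal{T}^k_\ell$. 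This handles all tableaux of type~1. For the unequal-height case $s>t$, I strip the tableau into a ``bottom'' part of height $t$ (the overlap of the two columns) and a ``tail'' of the taller column of length $s-t$; applying the equal-height straightening to the height-$t$ sub-tableau and then reattaching the tail — checking that the tail's entries, being all larger than $m=\min(i'_t,j'_t)$, can be appended consistently to whichever column remains taller after straightening — expresses $T_{IJ}$ in terms of tableaux of type~2 and type~3. The $t=1$ sub-case (one column of size one) is finite and small: here one checks by hand that the finitely many shapes of $T_{i_1\cdots i_s, j_1}$ reduce, via a single quadratic swap when $i_1>i_2$ or when a $B_1$/$B_2$ mismatch forces it, to the forms $3A(r)$, $3B$, $3C$, $3D$ listed in Definition~\ref{def:basis_fl}, matching them against the stated inequality conditions on $i_1,i_2,j_1$ and their block membership.

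The main obstacle I anticipate is the interface between the Grassmannian straightening (which is a clean statement about rectangular $2$-column tableaux of fixed height) and the ``ragged'' flag-variety tableaux where the two columns have different heights: one must verify that straightening the height-$t$ overlap never disturbs the tail, i.e. that every quadratic relation used in the overlap leaves the appended entries $\{i_{t+1}<\dots<i_s\}$ in a valid position relative to the new columns, and that the resulting tableau still satisfies the membership conditions (in particular the constraint ``$m<i_{t+1}$'' and the weak-increase/semi-standardness bookkeeping defining $S(T)$). A secondary subtlety is keeping track of signs: the relations in $A_\ell$ carry the signs $\sgn(B_\ell(I))$, so the straightening produces a genuine $\ZZ$-linear (not merely $\NN$-linear) combination, which is fine for spanning but must be stated carefully. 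Once these compatibility checks are in place, combining the type~1, type~2, and type~3 reductions shows every two-column tableau lies in the span of $\mathcal{T}_\ell$, which is the assertion of the lemma.
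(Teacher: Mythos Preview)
Your proposal is correct and follows essentially the same route as the paper: split on $|I|,|J|$; invoke Theorem~\ref{thm:basis_gr} for the equal-height case; for $|I|>|J|\ge 2$ straighten the top $|J|$ rows via Theorem~\ref{thm:basis_gr} and reattach the tail to whichever column has the smaller bottom entry; and for $|J|=1$ run the finite case check on the block membership of $i_1,i_2,j_1$. One simplification worth noting: because two monomials coincide in $A_\ell$ exactly when their tableaux are row-wise equal, the paper in fact shows that every $T_{IJ}$ is row-wise equal to a \emph{single} element of $\Tc_\ell$, so your concerns about $\ZZ$-linear combinations and sign bookkeeping are unnecessary---each degree-two monomial is literally equal in $A_\ell$ to one of the listed tableaux, not merely in their span.
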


\begin{proof}
We prove the lemma by showing that any tableau $T_{IJ}$ is row-wise equal to some tableau $T_{I'J'} $ in $ \Tc_{\ell}$. So fix $I, J \subseteq [n]$, we take cases on $|I|$ and $|J|$.

\textbf{Case 1.} Let $|I| = |J| = k$ for some $k$. By Theorem~\ref{thm:basis_gr} we have that there exists $I', J' \subseteq [n]$ such that $T_{IJ}$ and $T_{I'J'}$ are quadratically equivalent and $T_{I'J'} $ in $ \Tc^k_{\ell} \subset \Tc_{\ell}$.

\smallskip

\textbf{Case 2.} Let $|I| > |J| \ge 2$. Let $s = |I|$ and $t = |J|$. Consider the sub-tableau $T'$ of $T_{IJ}$ consisting of the first $t$ rows. By Theorem~\ref{thm:basis_gr} there exists $I', J' \subset [n]$ such that $T'$ is row-wise equal to $T_{I'J'}$ and $T_{I'J'} $ in $ \Tc^t_{\ell}$. Now if $i'_s \le j'_s$ set $I' := I' \cup \{i_{t+1}, \dots, i_s \}$. Otherwise if $i'_s > j'_s$ then set $J' := J' \cup \{i_{t+1}, \dots, i_s \}$. Then we have $T_{IJ}$ and $T_{I'J'}$ are row-wise equal and $T_{I'J'} $ in $ \Tc_{\ell}$ is a tableau of type 2. 

\smallskip

\textbf{Case 3.} Let $|I| \ge 2$ and $|J| = 1$. Now we take cases on $i_1, i_2$ and $j_1$. For each case $T_{IJ}$ is depicted in Figure~\ref{fig:lem_span_tableaux}. For simplicity we depict only those entries in the first two rows of $T_{IJ}$.

\smallskip

\textbf{Case 3.i.} Let $i_1, i_2, j_1 \in B_1$. If $i_1 \le j_1$ then we have that $T_{IJ} \in \Tc_{\ell}$ is a tableau of type 3A(1). On the other hand, if $i_1 > j_1$ then we may swap $i_1$ and $j_1$ to obtain the tableau $T_{I'J'} \in \Tc_{\ell}$ of type 3A(1).

\smallskip

\textbf{Case 3.ii.} Let $i_1, i_2 \in B_1, j_1 \in B_2$.
So $T_{IJ} \in \Tc_{\ell}$ is a tableau of type 3A(1).

\smallskip

\textbf{Case 3.iii.} Let $i_1, j_1 \in B_1, i_2 \in B_2$. If $i_1 \le j_1$ then we have that $T_{IJ} \in \Tc_{\ell}$ is a tableau of type 3D. If on the other hand $i_1 > j_1$ then we may swap $i_2$ and $j_1$ to obtain the tableau $T_{I'J'} \in \Tc_{\ell}$ of type 3A(1). 

\smallskip

\textbf{Case 3.iv.} Let $i_1 \in B_1, i_2, j_1 \in B_2$. 
If $i_2 \le j_1$ then we have that $T_{IJ} \in \Tc_{\ell}$ is a tableau of type 3C. If on the other hand $i_2 > j_1$ then we may swap $i_2$ and $j_1$ to obtain the tableau $T_{I'J'} \in \Tc_{\ell}$ of type 3C. 

\smallskip

\textbf{Case 3.v.} Let $j_1 \in B_1, i_1, i_2 \in B_2$.
So $T_{IJ} \in \Tc_{\ell}$ is a tableau of type 3B.

\smallskip

\textbf{Case 3.vi.} Let $i_1, i_2, j_1 \in B_2$.
If $i_1 \le j_1$ then we have that $T_{IJ} \in \Tc_{\ell}$ is a tableau of type 3A(2). If on the other hand $i_1 > j_1$ then we may swap $i_1$ and $j_1$ to obtain the tableau $T_{I'J'} \in \Tc_{\ell}$ of type 3A(2).

\begin{figure}
    \centering
    \begin{tabular}{c}
        \textbf{Case 3.i.} \\
        \begin{tabular}{|c|c|}
            \cline{1-2}
            \textcolor{blue}{$i_1$} & \textcolor{blue}{$j_1$} \\
            \cline{1-2}
            \textcolor{blue}{$i_2$} \\
            \cline{1-1}
        \end{tabular} \\
        $i_1, i_2, j_1 \in B_1$ \\
        \,
    \end{tabular}
    \,
    \begin{tabular}{c}
        \textbf{Case 3.ii.} \\
        \begin{tabular}{|c|c|}
            \cline{1-2}
            \textcolor{blue}{$i_1$} & $j_1$ \\
            \cline{1-2}
            \textcolor{blue}{$i_2$} \\
            \cline{1-1}
        \end{tabular} \\
        $i_1, i_2 \in B_1$ \\
        $j_1 \in B_2$
    \end{tabular}
    \,
    \begin{tabular}{c}
        \textbf{Case 3.iii.} \\
        \begin{tabular}{|c|c|}
            \cline{1-2}
            $i_2$ & \textcolor{blue}{$j_1$} \\
            \cline{1-2}
            \textcolor{blue}{$i_1$} \\
            \cline{1-1}
        \end{tabular}  \\
        $i_1, j_1 \in B_1$ \\
        $i_2 \in B_2$
    \end{tabular}
    \,
    \begin{tabular}{c}
        \textbf{Case 3.iv.} \\
        \begin{tabular}{|c|c|}
            \cline{1-2}
            $i_2$ & $j_1$ \\
            \cline{1-2}
            \textcolor{blue}{$i_1$} \\
            \cline{1-1}
        \end{tabular}  \\
        $i_1 \in B_1$ \\
        $i_2, j_1 \in B_2$
    \end{tabular}
    \,
    \begin{tabular}{c}
        \textbf{Case 3.v.} \\
        \begin{tabular}{|c|c|}
            \cline{1-2}
            $i_1$ & \textcolor{blue}{$j_1$} \\
            \cline{1-2}
            $i_2$ \\
            \cline{1-1}
        \end{tabular}  \\
        $j_1 \in B_1$ \\
        $i_1, i_2 \in B_2$
    \end{tabular}
    \,
    \begin{tabular}{c}
        \textbf{Case 3.vi.} \\
        \begin{tabular}{|c|c|}
            \cline{1-2}
            $i_1$ & $j_1$ \\
            \cline{1-2}
            $i_2$ \\
            \cline{1-1}
        \end{tabular}  \\
        \, \\
        $i_1, i_2, j_1 \in B_2$
    \end{tabular}
    \caption{Depiction of the first two rows of tableaux in Case 3 of the proof of Lemma~\ref{lem:basis_fl_span}.
    }
    \label{fig:lem_span_tableaux}
\end{figure}
\end{proof}

\begin{example}\label{example:basis_fl_span}
For each case in Lemma~\ref{lem:basis_fl_span} we provide an example of the manipulation of the tableaux. For Cases 1 and 2, the manipulations follow from Theorem~\ref{thm:basis_gr} so we consider Case 3 and fix the block diagonal matching field $(123|456)$ for $\Flag_6$.

\textbf{Case 3.i.}
Consider the tableau $T$ below. In this case we swap the entries in the first row to obtain $T'$ which is a tableau of type 3A(1).
\[
T = 
\begin{tabular}{|c|c|}
    \cline{1-2}
    $\textbf{\textcolor{blue}{2}}$ & $\textbf{\textcolor{blue}{1}}$ \\
    \cline{1-2}
    $3$ \\
    \cline{1-1}
    $4$ \\
    \cline{1-1}
\end{tabular}\, ,
\quad T' =
\begin{tabular}{|c|c|}
    \cline{1-2}
    $1$ & $2$ \\
    \cline{1-2}
    $3$ \\
    \cline{1-1}
    $4$ \\
    \cline{1-1}
\end{tabular}.
\]

\textbf{Case 3.ii.}
The tableau $T$ below is an example of a tableau of type 3A(1) which arises in this case.
\[
T = 
\begin{tabular}{|c|c|}
    \cline{1-2}
    $1$ & $4$ \\
    \cline{1-2}
    $3$ \\
    \cline{1-1}
    $4$ \\
    \cline{1-1}
\end{tabular}.
\]

\textbf{Case 3.iii.}
Consider the tableau $T$ below. The entry of the right column is less that the second entry of the left column. So we may swap the entries in the first row to obtain $T'$ which is a tableau of type 3A(1). On the other hand, the entry in the right column of $T''$ is greater or equal to the second row entry in the left column. So $T''$ is a tableau of type 3D.
\[
T = 
\begin{tabular}{|c|c|}
    \cline{1-2}
    $\textbf{\textcolor{blue}{4}}$ & $\textbf{\textcolor{blue}{1}}$ \\
    \cline{1-2}
    $2$ \\
    \cline{1-1}
    $5$ \\
    \cline{1-1}
\end{tabular}\, ,
\quad T' =
\begin{tabular}{|c|c|}
    \cline{1-2}
    $1$ & $4$ \\
    \cline{1-2}
    $2$ \\
    \cline{1-1}
    $5$ \\
    \cline{1-1}
\end{tabular}\, ,
\quad T'' =
\begin{tabular}{|c|c|}
    \cline{1-2}
    $4$ & $2$ \\
    \cline{1-2}
    $1$ \\
    \cline{1-1}
    $5$ \\
    \cline{1-1}
\end{tabular}.
\]

\textbf{Case 3.iv.} Consider the tableau $T$ below. We may swap the entries in the first row to obtain $T'$, a tableau of type 3C.

\[
T = 
\begin{tabular}{|c|c|}
    \cline{1-2}
    $\textbf{\textcolor{blue}{5}}$ & $\textbf{\textcolor{blue}{4}}$ \\
    \cline{1-2}
    $1$ \\
    \cline{1-1}
    $6$ \\
    \cline{1-1}
\end{tabular}\, ,
\quad T' =
\begin{tabular}{|c|c|}
    \cline{1-2}
    $4$ & $5$ \\
    \cline{1-2}
    $1$ \\
    \cline{1-1}
    $6$ \\
    \cline{1-1}
\end{tabular}.
\]

\textbf{Case 3.v.} The tableau $T$ below is an example of a tableau of type 3B:
\[
T = 
\begin{tabular}{|c|c|}
    \cline{1-2}
    $4$ & $1$ \\
    \cline{1-2}
    $5$ \\
    \cline{1-1}
    $6$ \\
    \cline{1-1}
\end{tabular}.
\]

\textbf{Case 3.vi.} Consider the tableau $T$ below. Since all its entries lie in $B_2$ we can put it in semi-standard form by a swap. The resulting tableau $T'$ is of type 3A(2).

\[
T = 
\begin{tabular}{|c|c|}
    \cline{1-2}
    $\textbf{\textcolor{blue}{5}}$ & $\textbf{\textcolor{blue}{4}}$ \\
    \cline{1-2}
    $6$ \\
    \cline{1-1}
\end{tabular}\, ,
\quad T' =
\begin{tabular}{|c|c|}
    \cline{1-2}
    $4$ & $5$ \\
    \cline{1-2}
    $6$ \\
    \cline{1-1}
\end{tabular}.
\]
\end{example}

\begin{lemma}\label{lem:basis_fl_indep}
The tableaux $\Tc_{\ell}$ are linearly independent.
\end{lemma}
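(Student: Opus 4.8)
\textbf{Proof plan for Lemma~\ref{lem:basis_fl_indep}.}
The plan is to show that the tableaux in $\Tc_{\ell}$ map to distinct monomials under a suitable monomial order on the image algebra, or equivalently that the map $S$ restricted to $\Tc_\ell$ is injective onto a set of semi-standard tableaux, which are themselves a known basis. Concretely, recall that $[A_\ell]_2$ sits inside $\mathbb{K}[x_{ij}]$ and every degree-two monomial $\inwb(P_I)\inwb(P_J) = \pm\,\xb_{B_\ell(I)}\xb_{B_\ell(J)}$ of $A_\ell$ is, up to sign, a monomial in the $x_{ij}$. Two tableaux $T_{IJ}$ and $T_{I'J'}$ give the same monomial in $\mathbb{K}[x_{ij}]$ precisely when, reading off the exponent vector, the pair of columns records the same placement of entries into rows; since the entries within each column are placed by $B_\ell$, this happens iff $\{I,J\}$ and $\{I',J'\}$ are row-wise equal \emph{and} have the same $B_\ell$-orderings. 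So the only way two distinct tableaux in $\Tc_\ell$ can fail to be linearly independent is through a cancellation of signs among row-wise-equal tableaux with different column orderings.

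The key step, therefore, is to argue that $\Tc_\ell$ contains at most one tableau in each row-wise-equivalence class; combined with Lemma~\ref{lem:basis_fl_span} this gives that $\Tc_\ell$ is a basis and in particular independent. First I would reduce to the Grassmannian situation: for tableaux of type 1 this is exactly the injectivity of $S^k$ on $\Tc^k_\ell$ from Theorem~\ref{thm:basis_gr}, and for type 2 it follows by stripping the appended strictly-increasing tail $\{i_{t+1},\dots,i_s\}$ (which is forced by the row-content, since it must fill the rows below position $t$) and again invoking Theorem~\ref{thm:basis_gr} on the top $t\times 2$ block. The genuinely new case is type 3, where $|J|=1$: here I would show that the single entry $j_1$ together with the pair $(i_1,i_2)$ in the first two rows of the left column determines the type (3A(1), 3A(2), 3B, 3C, or 3D) uniquely, by checking that the defining inequalities and membership conditions ($i_1,i_2,j_1$ in $B_1$ or $B_2$, and the comparisons $i_1\le j_1$ etc.) are mutually exclusive and exhaustive across the types — this is essentially the content of the case analysis already performed in the proof of Lemma~\ref{lem:basis_fl_span}, read in reverse. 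Once the type is pinned down, the multiset of row contents determines the actual entries, so the tableau in $\Tc_\ell$ representing a given row-wise class is unique.

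The main obstacle I anticipate is handling the boundary between type-1 and type-3 tableaux cleanly: a row-wise-equal pair where one tableau has $|I|=|J|=2$ and another (after a swap) has $|I|\ge 2$, $|J|=1$ — but in fact such a pair cannot be row-wise equal at all, since the number of size-one columns is a row-wise invariant (it equals the number of rows where the two columns contribute only one entry total), so type 1, type 2, and type 3 live in disjoint row-wise classes by the column-size profile. After establishing this separation, the argument within each type is the finite, mutually-exclusive case check described above. I would then conclude: since distinct elements of $\Tc_\ell$ lie in distinct row-wise-equivalence classes, they map to distinct (up to sign) monomials in $\mathbb{K}[x_{ij}]$, and monomials are linearly independent, so $\Tc_\ell$ is linearly independent.
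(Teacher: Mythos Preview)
Your proposal is correct and follows essentially the same approach as the paper: reduce linear independence to showing that no two tableaux in $\Tc_\ell$ are row-wise equal (since their images in $A_\ell$ are monomials in the $x_{ij}$), then handle types 1 and 2 by stripping to the top $t\times 2$ block and invoking Theorem~\ref{thm:basis_gr}, and type 3 by a direct case check on the first two rows. The paper carries out the type-3 check by writing down, for each sub-type 3A--3D, the unique candidate obtained by swapping the first-row entries and verifying it is either equal to the original, not a valid matching field tableau, or not in $\Tc_\ell$; your phrasing in terms of mutually exclusive membership conditions on $(i_1,i_2,j_1)$ amounts to the same verification.

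Two small remarks. First, your aside about ``cancellation of signs'' is unnecessary: two tableaux map to linearly dependent elements of $A_\ell$ iff the underlying $x$-monomials coincide, i.e.\ iff they are row-wise equal, regardless of sign. Second, saying the type-3 check is ``the case analysis of Lemma~\ref{lem:basis_fl_span} read in reverse'' is slightly misleading --- that lemma establishes existence of a representative, not uniqueness --- so you will still need to do the exclusivity check explicitly rather than citing it.
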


\begin{proof}
Since the ideal associated to the block diagonal matching field is generated by binomials, it suffices to show that no two tableaux in $\Tc_{\ell}$ are row-wise equal. Suppose by contradiction that $T_{IJ}$ and $T_{I'J'}$ are two distinct row-wise equal tableaux in $\Tc_{\ell}$. We may assume without loss of generality $|I| = |I'|$ and $|J| = |J'|$. We now proceed by taking cases on the type of $T_{IJ}$.

\smallskip

\textbf{Case 1.} Let $T_{IJ}$ be of type 1. Then $|I| = |I'| = |J|  = |J'| = k$ for some $k$. Therefore $T_{I'J'}$ is also of type 1 and $T_{IJ}$ and $T_{I'J'}$ belong to $\Tc_{\ell}^k$. However $T_{IJ}$ and $T_{I'J'}$ are linearly independent by Theorem~\ref{thm:basis_gr}, a contradiction.

\smallskip

\textbf{Case 2.} Let $T_{IJ}$ be of type 2. Then $|I| > |J| \ge 2$. Let $U_{IJ}$ be the sub-tableau of $T_{IJ}$ consisting of the first $|J|$ rows and $V_{I}$ be the sub-tableau consisting of the final $|I| - |J|$ rows. Note that $V_I$ depends only on the contents of $I$. Similarly we define $U_{I'J'}$ and $V_{I'}$. By row-wise equality of $T_{IJ}$ and $T_{I'J'}$, it follows that $V_{I}$ and $V_{I'}$ are equal and so $U_{IJ}$ and $U_{I'J'}$ are row-wise equal and distinct. By Definition~\ref{def:basis_gr}, we have that $U_{IJ}$ and $U_{I'J'}$ belong to $\Tc_{\ell}^{|J|}$. However, since $U_{IJ}$ and $U_{I'J'}$ are distinct by Theorem~\ref{thm:basis_gr} they are linearly independent, a contradiction.

\smallskip

\textbf{Case 3.} Let $T_{IJ}$ be of type 3. So $|I| > |J| = 1$. Note that, by row-wise equality, $T_{I'J'}$ is obtained from $T_{IJ}$ by swapping the entries in the first row. In particular, given $T_{IJ}$ there is exactly one option for $T_{I'J'}$. We now take cases on the sub-types of $T_{IJ}$, see Definition~\ref{def:basis_fl}.  Note that when considering the type of $T_{IJ}$ and $T_{I'J'}$, we need only consider the first two rows of the tableaux.

\smallskip

\textbf{Case 3.A.} Let $T_{IJ}$ be of type 3.A. So the tableaux have the following form,
\[
T_{IJ} = 
\begin{tabular}{|c|c|}
    \cline{1-2}
    $i_1$ & $j_1$  \\
    \cline{1-2}
    $i_2$ \\
    \cline{1-1}
\end{tabular}\, , \quad
T_{I'J'} = 
\begin{tabular}{|c|c|}
    \cline{1-2}
    $j_1$ & $i_1$ \\
    \cline{1-2}
    $i_2$ \\
    \cline{1-1}
\end{tabular} \,
\]
where $i_1 < i_2$ and $i_1 \le j_1$. If $T_{I'J'}$ is also of type 3.A then we have $i_1 = j_1$ and so $T_{IJ}$ and $T_{I'J'}$ are equal, a contradiction. Otherwise, 
if $T_{I'J'}$ is not of type 3.A, then we must have that $i_1, i_2 \in B_1$ and $j_1 \in B_2$. Therefore $T_{I'J'}$ is a tableau of type 3.D and $i_2 \le i_1$, a contradiction.

\smallskip

\textbf{Case 3.B.}
Let $T_{IJ}$ be of type 3.B. So the tableaux have the following form,
\[
T_{IJ} = 
\begin{tabular}{|c|c|}
    \cline{1-2}
    $i_1$ & \textcolor{blue}{$j_1$}  \\
    \cline{1-2}
    $i_2$ \\
    \cline{1-1}
\end{tabular}\, , \quad
T_{I'J'} = 
\begin{tabular}{|c|c|}
    \cline{1-2}
    \textcolor{blue}{$j_1$} & $i_1$ \\
    \cline{1-2}
    $i_2$ \\
    \cline{1-1}
\end{tabular} \,
\]
where $i_1, i_2 \in B_2$, $j_1 \in B_1$ and $i_1 < i_2$. However, $T_{I'J'}$ is not a valid matching field tableau, a contradiction.

\smallskip

\textbf{Case 3.C.}
Let $T_{IJ}$ be of type 3.C. So the tableaux have the following form,
\[
T_{IJ} = 
\begin{tabular}{|c|c|}
    \cline{1-2}
    $i_2$ & $j_1$  \\
    \cline{1-2}
    \textcolor{blue}{$i_2$} \\
    \cline{1-1}
\end{tabular}\, , \quad
T_{I'J'} = 
\begin{tabular}{|c|c|}
    \cline{1-2}
    $j_1$ & $i_2$ \\
    \cline{1-2}
    \textcolor{blue}{$i_1$} \\
    \cline{1-1}
\end{tabular} \,
\]
where $i_1 \in B_1$, $ i_2, j_1 \in B_2$ and $i_2 \le j_1$. It follows that $T_{I'J'}$ is also a tableau of type 3.C and so $i_2 = j_1$. Therefore $T_{IJ}$ and $T_{I'J'}$ are equal, a contradiction.

\smallskip

\textbf{Case 3.D.}
Let $T_{IJ}$ be of type 3.D. So the tableaux have the following form,
\[
T_{IJ} = 
\begin{tabular}{|c|c|}
    \cline{1-2}
    $i_2$ & \textcolor{blue}{$j_1$}  \\
    \cline{1-2}
    \textcolor{blue}{$i_2$} \\
    \cline{1-1}
\end{tabular}\, , \quad
T_{I'J'} = 
\begin{tabular}{|c|c|}
    \cline{1-2}
    \textcolor{blue}{$j_1$} & $i_2$ \\
    \cline{1-2}
    \textcolor{blue}{$i_1$} \\
    \cline{1-1}
\end{tabular} \,
\]
where $i_1, j_1 \in B_1$, $ i_2 \in B_2$ and $i_1 \le j_1$. However $T_{I'J'}$ is not a valid matching field tableau, a contradiction.
\end{proof}

\begin{lemma}\label{lem:basis_fl_biject}
The map $S$ is a bijection between $\Tc_{\ell}$ and the set of semi-standard tableaux with two columns.
\end{lemma}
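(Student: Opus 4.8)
The plan is to establish that $S \colon \Tc_\ell \to \{\text{two-column semi-standard tableaux}\}$ is a bijection by combining the Grassmannian-case bijectivity of $S^k$ from Theorem~\ref{thm:basis_gr} with a column-count/cardinality argument, together with the spanning and independence results of Lemmas~\ref{lem:basis_fl_span} and~\ref{lem:basis_fl_indep}. First I would check that $S$ is well-defined, i.e.\ that $S(T)$ really is a two-column semi-standard tableau for every $T \in \Tc_\ell$: for type 1 this is immediate from Theorem~\ref{thm:basis_gr} since $S(T) = S^k(T)$; for type 2 one must verify that adjoining the tail $\{i_{t+1}, \dots, i_s\}$ with $m < i_{t+1} < \dots < i_s$ to the appropriate column of the semi-standard tableau $S^t(T_{I'J'})$ keeps rows weakly increasing and the (single, longer) column strictly increasing — this follows because $m = \min\{i''_t, j''_t\}$ bounds the new entries from below; for type 3 one inspects each of the four sub-cases 3A, 3B, 3C, 3D directly and sees the displayed $S(T)$ is semi-standard (rows weakly increasing, the long first column strictly increasing), using the stated inequalities such as $i_1 \le j_1$, $i_2 \le j_1$, etc.

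Next I would prove surjectivity. Given an arbitrary two-column semi-standard tableau $U = T_{AB}$ with $|A| = s \ge |B| = t \ge 1$ and $A = \{a_1 < \dots < a_s\}$, $B = \{b_1 < \dots < b_t\}$ with $a_i \le b_i$ for all $i \le t$, I want to produce $T \in \Tc_\ell$ with $S(T) = U$. If $t \ge 2$: consider the truncation $U'$ to the first $t$ rows, which is a two-column semi-standard tableau for $\Gr(t,n)$; by the bijectivity of $S^t$ there is a unique $T' \in \Tc^t_\ell$ with $S^t(T') = U'$, and then re-attaching the tail $\{a_{t+1}, \dots, a_s\}$ to the correct column (the left column if $i'_t \le j'_t$ in $T'$, the right column otherwise — matching the convention in the definition of type 2, resp.\ the $t = s$ possibility giving type 1) yields the desired preimage; the tail is admissible because $a_{t+1} > a_t \ge \min\{i''_t, j''_t\} = m$. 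If $t = 1$, so $U$ has first row $(a_1, b_1)$ with $a_1 \le b_1$ and a single long column $a_1 < a_2 < \dots < a_s$: I would split on membership of $a_1, a_2, b_1$ in $B_1$ vs.\ $B_2$, matching the six cases of Figure~\ref{fig:lem_span_tableaux}, and in each case read off from the type 3A/3B/3C/3D descriptions which tableau $T$ maps to $U$ under $S$ — e.g.\ if $b_1 \in B_1$ and $a_1, a_2 \in B_2$ take the type 3B tableau with first row $(a_1, b_1)$ so that $S(T)$ has first row $(b_1, a_1) = (a_1', b_1')$... one must be careful about whether the given $U$ already is the image, reversing the swap performed by $S$.

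Finally, injectivity follows cheaply from Lemma~\ref{lem:basis_fl_indep}: if $S(T_1) = S(T_2)$ then, since $S$ is defined so that $S(T)$ is always row-wise equal to $T$ (each sub-case either fixes $T$ or swaps two entries within a row, and the type-2 tail is untouched), $T_1$ and $T_2$ are row-wise equal, hence represent the same monomial in $A_\ell$; but the tableaux of $\Tc_\ell$ are linearly independent in $[A_\ell]_2$ by Lemma~\ref{lem:basis_fl_indep}, and for a binomial (toric) ideal linear independence of monomial classes is exactly the statement that no two are row-wise equal, so $T_1 = T_2$. Alternatively — and this is the cleanest packaging — one notes that Lemmas~\ref{lem:basis_fl_span} and~\ref{lem:basis_fl_indep} together say $\Tc_\ell$ is a $\K$-basis of $[A_\ell]_2$; the two-column semi-standard tableaux are a $\K$-basis of the degree-two part of the classical Pl\"ucker algebra $[\A_n]_2$ (standard monomial theory), which has the same dimension as $[A_\ell]_2$ since $A_\ell = \inwb(\A_n)$ is a Gr\"obner/SAGBI degeneration; so $S$ is a map between two sets of equal finite cardinality, and it suffices to prove it injective \emph{or} surjective, whichever is easier. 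The main obstacle I anticipate is purely bookkeeping: in the type-3 surjectivity argument one must check that every two-column semi-standard tableau with a size-one second column arises as $S(T)$ for exactly one of the 3A(1), 3A(2), 3B, 3C, 3D tableaux, and in particular that the ``forbidden'' configurations (where $S$ would have to invert a swap producing an invalid matching field column, as in Cases 3.B and 3.D of Lemma~\ref{lem:basis_fl_indep}) do not create a preimage gap — this dovetails with the case analysis already carried out in the proof of Lemma~\ref{lem:basis_fl_span}, so I would lean on Figure~\ref{flowchart:lem_span_proof} to organize it.
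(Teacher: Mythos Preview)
Your surjectivity argument --- truncate to the first $t$ rows, apply the Grassmannian bijection $(S^t)^{-1}$, re-attach the tail, and handle the $|J|=1$ case by a split on which of $i_1,i_2,j_1$ lie in $B_1$ versus $B_2$ --- is exactly what the paper does, phrased there as ``showing the inverse of $S$ is well-defined'' with the same case structure.

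There are, however, two genuine problems with the rest of the proposal. First, the injectivity argument rests on a false premise: it is \emph{not} true that $S(T)$ is always row-wise equal to $T$. Already for type 3C (and 3D) in Definition~\ref{def:basis_fl} the map $S$ replaces the left column $(i_2,i_1,i_3,\dots)^{tr}$ by $(i_1,i_2,i_3,\dots)^{tr}$, moving $i_1$ and $i_2$ between \emph{different} rows, not within a row; and for the Grassmannian types $1.2$ and $1.3A$--$1.3D$ the map $S^k$ likewise shuffles entries across rows (e.g.\ in type $3A$ of Definition~\ref{def:basis_gr} row~1 goes from $\{i_2,j_1\}$ to $\{i_1,j_1\}$). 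So $S(T_1)=S(T_2)$ does not force $T_1,T_2$ to be row-wise equal, and Lemma~\ref{lem:basis_fl_indep} cannot be invoked this way. The paper instead obtains injectivity directly from the inverse construction: in each branch of the case split on the semi-standard side it checks that exactly one $T\in\Tc_\ell$ is produced, so the constructed map really is a two-sided inverse.

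Second, your ``cleanest packaging'' is circular. You invoke $A_\ell=\inwb(\mathcal{A}_n)$ to equate $\dim[A_\ell]_2$ with the number of two-column semi-standard tableaux, but that equality is precisely the SAGBI-basis statement of Theorem~\ref{prop:SAGBI}, whose proof \emph{uses} Lemma~\ref{lem:basis_fl_biject} to establish $\dim[A_\ell]_2=\dim[A_0]_2$. At this point in the argument only the trivial containment $A_\ell\subseteq\inwb(\mathcal{A}_n)$ is available, so no cardinality shortcut exists and you must carry out the explicit case analysis for both existence and uniqueness of preimages.
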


\begin{proof}
We proceed to show that the inverse to $S$ is well defined. Let $T = T_{IJ}$ be a tableau in semi-standard form. We will take cases on $|I|$ and $|J|$.

\textbf{Case 1.} Let $|I| = |J| = k$, for some $k$. By Theorem~\ref{thm:basis_gr} we have $S^{-1}(T_{IJ}) = (S^k)^{-1}(T_{IJ})$.

\smallskip

\textbf{Case 2.} Let $|I| > |J| \ge 2$ and $t = |J|$ for some $t$. Let $\tilde{T}$ be the sub-tableau of $T$ consisting of the first $t$ rows. By Theorem~\ref{thm:basis_gr}, we have that $S^t$ is a bijection between $T^t_{\ell}$ and rectangular semi-standard tableau with two columns and $t$ rows. Let $T_{I'J'} = (S^t)^{-1}(\tilde{T})$. If $i'_t \le j'_t$ then redefine $I' := I' \cup \{ i_{t+1}, \dots, i_{s} \}$. Otherwise, redefine $J' := J' \cup \{i_{t+1}, \dots, i_s \}$. So $S^{-1}(T_{IJ}) = T'_{I'J'}$.

\smallskip

\textbf{Case 3.} Let $|I| \ge 2$ and $ |J| = 1$. We take cases on $i_1, i_2, j_1$. 

\textbf{Case 3.i.} Let $i_1, i_2 \in B_1$. We have $S^{-1}(T) = T$ which is a tableau of type 3A(1).

\smallskip

\textbf{Case 3.ii.} Let $i_1, j_1 \in B_1$ and $ i_2 \in B_2$. Then $S^{-1}(T) = T'$ is the tableau of type 3D from Page~\pageref{page:3D} as shown in Figure~\ref{fig:lem_bij_tableaux} (left).

\smallskip

\textbf{Case 3.iii.} Let $i_1 \in B_1$ and $ i_2, j_1 \in B_2$. We take cases on $j_1$ and $i_2$.

\textbf{Case 3.iii.a.} Let $j_1 < i_2$. Then $S^{-1}(T) = T'$ is the tableau of type 3B. See Figure~\ref{fig:lem_bij_tableaux} (middle).

\textbf{Case 3.iii.b.} Let $j_1 \ge i_2$. Then $S^{-1}(T) = T'$ is the tableau of type 3C. See Figure~\ref{fig:lem_bij_tableaux} (right).
\smallskip

\textbf{Case 3.iv.} Let $i_1, i_2, j_1 \in B_2$. We have $S^{-1}(T) = T$ which is a tableau of type 3A(2). 

\begin{figure}
    \centering
    \begin{tabular}{c}
        \textbf{Case 3.ii.} \\
        \begin{tabular}{|c|c|}
            \cline{1-2}
            $i_2$ & $j_1$ \\
            \cline{1-2}
            $i_1$ \\
            \cline{1-1}
            $i_3$ \\
            \cline{1-1}
            $\vdots$ \\
            \cline{1-1}
            $i_s$ \\
            \cline{1-1}
        \end{tabular} \\
        $i_1, j_1 \in B_1$ \\
        $i_2 \in B_2$ \\
        \,
    \end{tabular}
    \quad
    \begin{tabular}{c}
        \textbf{Case 3.iii.a.} \\
        \begin{tabular}{|c|c|}
            \cline{1-2}
            $j_1$ & $i_1$ \\
            \cline{1-2}
            $i_2$ \\
            \cline{1-1}
            $i_3$ \\
            \cline{1-1}
            $\vdots$ \\
            \cline{1-1}
            $i_s$ \\
            \cline{1-1}
        \end{tabular} \\
        $i_1 \in B_1$ \\
        $i_2, j_1 \in B_2$ \\
        $j_1 < i_2$
    \end{tabular}
    \quad
    \begin{tabular}{c}
        \textbf{Case 3.iii.b.} \\
        \begin{tabular}{|c|c|}
            \cline{1-2}
            $i_2$ & $j_1$ \\
            \cline{1-2}
            $i_1$ \\
            \cline{1-1}
            $i_3$ \\
            \cline{1-1}
            $\vdots$ \\
            \cline{1-1}
            $i_s$ \\
            \cline{1-1}
        \end{tabular} \\
        $i_1 \in B_1$ \\
        $i_2, j_1 \in B_2$ \\
        $j_1 \ge i_2$
    \end{tabular}
    
    \caption{The tableaux for Case 3 in the proof of Lemma~\ref{lem:basis_fl_biject}}
    \label{fig:lem_bij_tableaux}
\end{figure}

In each case the pre-image of $S$ is unique, so we have shown that $S$ is a bijection from $\Tc_{\ell}$ to semi-standard tableaux with two columns.
\end{proof}

\begin{example}
Fix the block diagonal matching field $B_3=(123|456)$. Consider the semi-standard tableau $T$ below. Since the right hand column $J$ has size one, we look at the values in the first two rows. The values are $i_1 = 1, i_2 = 4$ and $j_1 = 2$, so we are in Case 3.ii. of the proof of Lemma~\ref{lem:basis_fl_biject}. Hence $S(T') = T$ for the following tableau $T' \in \Tc_{3}$,
\[
T = 
        \begin{tabular}{|c|c|}
            \multicolumn{1}{c}{$I$} & \multicolumn{1}{c}{$J$} \\
            \cline{1-2}
            1 & 2 \\
            \cline{1-2}
            4 \\
            \cline{1-1}
            6 \\
            \cline{1-1}
        \end{tabular}\, ,
\quad T' = 
        \begin{tabular}{|c|c|}
            \multicolumn{1}{c}{$I'$} & \multicolumn{1}{c}{$J'$} \\
            \cline{1-2}
            4 & 2 \\
            \cline{1-2}
            1 \\
            \cline{1-1}
            6 \\
            \cline{1-1}
        \end{tabular}.
\]
In particular, the tableau $T'$ lies in $\Tc_{3}$ and is of type 3D. Note that this manipulation of the tableau depends only upon the first two rows. We give examples for all manipulations from Case 3 of Lemma~\ref{lem:basis_fl_biject} in Figure~\ref{fig:bij_examples}, where we depict the changes only in the first two rows.
\end{example}

\begin{figure}
    \centering
    \includegraphics[scale=0.75]{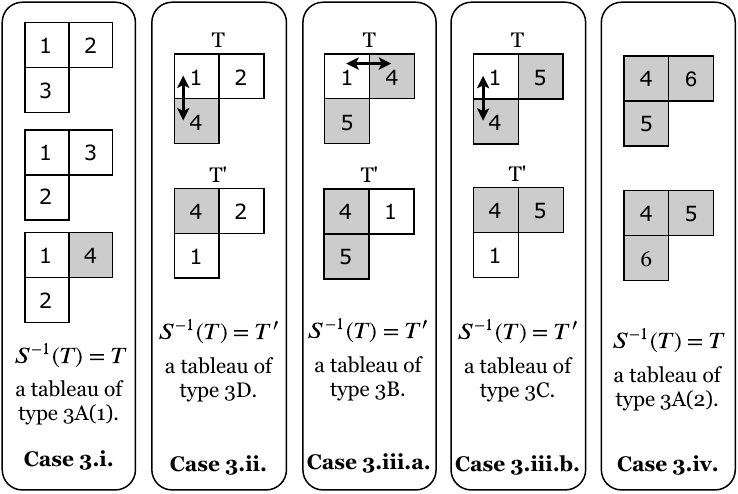}
    \caption{Examples of semi-standard tableau and their pre-image under the map $S$. The cells of the tableau are colored so that entries in $B_1 = \{1,2,3 \}$ are white and the entries in $B_2 = \{4,5,6 \}$ are shaded.}
    \label{fig:bij_examples}
\end{figure}

Now, we are ready to present our main result which extends the classical result from the diagonal matching field tableaux, known as semi-standard tableaux to block diagonal matching field tableaux.
We follow the notation in Definition~\ref{sagbi}.

\begin{theorem}\label{prop:SAGBI}
The Pl\"ucker forms are a SAGBI basis for the Pl\"ucker algebra with respect to the weight vectors arising from block diagonal matching fields.
\end{theorem}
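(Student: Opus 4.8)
The plan is to reduce the flag-variety statement to the already-established Grassmannian case (Theorem~\ref{thm:basis_gr}) together with the structural results about the tableaux $\Tc_\ell$ proved in Lemmas~\ref{lem:basis_fl_span}, \ref{lem:basis_fl_indep} and \ref{lem:basis_fl_biject}. By Definition~\ref{sagbi}, I must check two things: first, that for every nonempty proper subset $I\subsetneq[n]$ the initial form $\inwb(P_I)$ is a monomial; second, that $\inwb(\mathcal{A}_n)=A_\ell$. The first point is immediate from the definition of a coherent matching field: $B_\ell$ is coherent, induced by the matrix $M_\ell$, so by definition $\inwb(P_I)=\init_{M_\ell}(P_I)=\operatorname{sgn}(B_\ell(I))\mathbf{x}_{B_\ell(I)}$ is a (signed) monomial. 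So the content of the theorem is the equality of algebras.

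The inclusion $A_\ell\subseteq\inwb(\mathcal{A}_n)$ is automatic, since each generator $\init_{M_\ell}(P_I)$ of $A_\ell$ is the initial form of the Plücker form $P_I\in\mathcal{A}_n$. For the reverse inclusion, I would use the standard SAGBI/Khovanskii-basis criterion via Hilbert functions: since both $\mathcal{A}_n$ and $\inwb(\mathcal{A}_n)$ are graded with the same Hilbert function (the initial algebra of a graded algebra has the same Hilbert function), and $A_\ell\subseteq\inwb(\mathcal{A}_n)$, it suffices to show that in each graded degree the dimension of $[A_\ell]_d$ is at least $\dim[\mathcal{A}_n]_d=\dim[\inwb(\mathcal{A}_n)]_d$. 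Equivalently, it is enough to show that $A_\ell$ and $\mathcal{A}_n$ have the same Hilbert function, i.e.\ that $[A_\ell]_d$ has the same dimension as the degree-$d$ part of the Plücker algebra, namely the number of (possibly repeated-column, row-wise-inequivalent) semi-standard-type combinatorial data indexing a basis of $[\mathcal{A}_n]_d$. Because the algebra $A_\ell$ is generated in degree one by the monomials $\init_{M_\ell}(P_I)$, and monomials in $A_\ell$ correspond to tableaux up to row-wise equality, it in fact suffices to treat degree $d=2$: a toric (binomial) algebra generated in degree one whose degree-two part has the ``right'' dimension automatically has the right Hilbert function in all degrees, provided one knows the degree-two relations generate all relations --- and this is exactly what Theorem~\ref{prop:quad} gives, since $J_\Lambda=\ker\phi_\Lambda$ presents $A_\ell$ and is quadratically generated.

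So the heart of the argument is the degree-two statement. Lemma~\ref{lem:basis_fl_span} shows $\Tc_\ell$ spans $[A_\ell]_2$, Lemma~\ref{lem:basis_fl_indep} shows $\Tc_\ell$ is linearly independent, hence $\Tc_\ell$ is a basis of $[A_\ell]_2$; and Lemma~\ref{lem:basis_fl_biject} shows $S$ is a bijection from $\Tc_\ell$ onto the set of two-column semi-standard tableaux, which is exactly a basis for the degree-two part $[\mathcal{A}_n^{\mathrm{diag}}]_2$ of the classical (Gelfand--Cetlin) initial algebra, and hence has the same cardinality as a basis of $[\mathcal{A}_n]_2$. Therefore $\dim[A_\ell]_2=\dim[\mathcal{A}_n]_2=\dim[\inwb(\mathcal{A}_n)]_2$, and combined with $A_\ell\subseteq\inwb(\mathcal{A}_n)$ we get equality in degree two. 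Finally I invoke Theorem~\ref{prop:quad}: the presentation ideal $J_\Lambda$ of $A_\ell$ is generated by quadrics, so the surjection from the Rees-type comparison forces $[A_\ell]_d=[\inwb(\mathcal{A}_n)]_d$ in every degree, giving $\inwb(\mathcal{A}_n)=A_\ell$. This establishes the SAGBI property; the analogous Grassmannian statements $\inwb(\mathcal{A}_n^k)=A_\ell^k$ are Theorem~\ref{thm:basis_gr}, and the flag statement for each $\ell$ then also yields the promised toric degenerations of $\Flag_n$ since $\inw{B_\ell}(I_n)=J_{\Lambda}$ is toric.

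I expect the main obstacle to be the bookkeeping in the degree-two comparison: one must be careful that ``the number of two-column semi-standard tableaux'' really equals $\dim[\mathcal{A}_n]_2$ when columns of unequal and size-one length are allowed (the flag, not Grassmannian, setting), and that the quadratic-generation input from Theorem~\ref{prop:quad} genuinely upgrades the degree-two equality to all degrees --- i.e.\ that the combinatorially-defined relations among the $\init_{M_\ell}(P_I)$ exhaust all relations, so no ``extra'' elements of $\inwb(\mathcal{A}_n)$ can appear in higher degree. Once the degree-two dictionary of Lemmas~\ref{lem:basis_fl_span}--\ref{lem:basis_fl_biject} is in hand, the rest is a formal Hilbert-function argument.
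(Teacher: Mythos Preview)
Your proposal is correct and follows essentially the same route as the paper's proof: both reduce the SAGBI property to the degree-two dimension equality $\dim[A_\ell]_2=\dim[A_0]_2$, establish that equality via Lemmas~\ref{lem:basis_fl_span}--\ref{lem:basis_fl_biject}, and use Theorem~\ref{prop:quad} (quadratic generation of $J_\Lambda$) to propagate to all degrees. The only cosmetic difference is that the paper compares $A_\ell$ to the diagonal algebra $A_0$, whereas you compare directly to $\mathcal{A}_n$; since the diagonal case is already known to be SAGBI these are equivalent benchmarks, and your write-up is in fact more explicit than the paper's about \emph{why} degree two suffices (the paper simply asserts ``it is enough to show $\dim([A_\ell]_2)=\dim([A_0]_2)$'').

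One point worth tightening: your sentence ``the surjection from the Rees-type comparison forces $[A_\ell]_d=[\inwb(\mathcal{A}_n)]_d$'' is doing real work and should be made precise. The clean way is on the ideal side: for a coherent matching field one always has $\inwb(I_n)\subseteq J_\Lambda$ (the lowest-weight part of $\varphi_n(f)=0$ is exactly $\phi_\Lambda(\inwb f)$), giving a surjection $R/\inwb(I_n)\twoheadrightarrow A_\ell$; equality of dimensions in degree two then yields $[\inwb(I_n)]_2=[J_\Lambda]_2$, and since $J_\Lambda=\langle[J_\Lambda]_2\rangle$ by Theorem~\ref{prop:quad} you get $J_\Lambda\subseteq\inwb(I_n)$, hence $J_\Lambda=\inwb(I_n)$. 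Without this containment, ``quadratic relations plus matching degree-two dimension'' does \emph{not} by itself pin down the Hilbert function (two different quadratic ideals of the same degree-two dimension can have different Hilbert functions), so this is exactly the obstacle you flagged at the end --- it is resolved by the automatic inclusion $\inwb(I_n)\subseteq J_\Lambda$, which you should state explicitly.
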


\begin{proof}
By Theorem~\ref{prop:quad}, we have seen that the matching field ideal $J_{B_{\ell}}$ is quadratically generated. In order to show that $\{P_I:\ \emptyset\neq I\subsetneq [n]\}$ forms a SAGBI basis for $\mathcal{A}_{n}$, by \cite[Theorem~11.4]{sturmfels1996grobner} it is enough to show that $\init_{w_\ell}(I_n) = J_{\BLambda}$.

By Lemma~\ref{lem:basis_fl_span} and Lemma~\ref{lem:basis_fl_indep} we have that $\Tc_{\ell}$ is a basis for $[A_{\ell}]_2$. Then by Lemma~\ref{lem:basis_fl_biject} we have that $\Tc_{\ell}$ is in bijection with semi-standard tableaux with two columns. Since semi-standard tableaux form a basis for $A_{0}$, we have shown that $\dim([A_{\ell}]_2) = \dim([A_{0}]_2)$.

To show that $\init_{w_\ell}(I_n) = J_{\BLambda}$, we note that $J_{\BLambda}$ is quadratically generated and $\dim([A_{\ell}]_2) = \dim([A_{0}]_2)$. Therefore we may apply the same method as the proof in \cite[Theorem~4.3]{OllieFatemeh} because \cite[Lemma~4.4, 4.5 and 4.6]{OllieFatemeh} also hold for $I_n$.
\end{proof}
As a corollary of the above statements and \cite[Theorem 11.4]{sturmfels1996grobner} we have that:

\begin{corollary} \label{cor:SAGBI}
Each block diagonal matching field produces a toric degeneration of $\Flag_n$. 
\end{corollary}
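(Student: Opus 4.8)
The plan is to deduce the corollary almost immediately from Theorem~\ref{prop:SAGBI} together with the standard machinery connecting SAGBI bases (Khovanskii bases) to toric degenerations, as packaged in \cite[Theorem 11.4]{sturmfels1996grobner}. First I would recall the setup: $\mathcal{A}_n$ is the Pl\"ucker algebra of $\Flag_n$, realized as the subalgebra of $\mathbb{K}[x_{ij}]$ generated by the Pl\"ucker forms $\{P_I : \varnothing \neq I \subsetneq [n]\}$, and $\Flag_n$ is $\Proj$ of (a suitable multigraded version of) this algebra. The weight vector $\wb_\ell$ on the Pl\"ucker variables induces, via Definition~\ref{def:matching_field_weight}, a weight filtration on $R = \mathbb{K}[P_I]$ whose associated graded is governed by $\init_{\wb_\ell}(I_n)$.

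The key step is the following chain of identifications. By Theorem~\ref{prop:SAGBI}, the Pl\"ucker forms form a SAGBI basis of $\mathcal{A}_n$ with respect to $\wb_\ell$, which by Definition~\ref{sagbi} means each $\inwb(P_I)$ is a monomial and $\inwb(\mathcal{A}_n) = A_\ell$. Since every generator's initial form is a monomial, the initial algebra $A_\ell = \inwb(\mathcal{A}_n)$ is generated by monomials, hence is an affine semigroup ring, hence toric. Next I would invoke the general fact (the content of \cite[Theorem 11.4]{sturmfels1996grobner} in this context) that whenever a set of algebra generators is a SAGBI basis for a weight $\wb_\ell$, the initial ideal $\init_{\wb_\ell}(I_n)$ of the defining ideal coincides with the toric ideal $J_{B_\ell}$ presenting $A_\ell$ (equivalently, $R/\init_{\wb_\ell}(I_n) \cong A_\ell$). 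Concretely: the surjection $R \to \mathcal{A}_n$, $P_I \mapsto P_I$ has kernel $I_n$; passing to initial forms it induces a surjection $R \to A_\ell$, $P_I \mapsto \inwb(P_I)$, whose kernel is exactly $\init_{\wb_\ell}(I_n)$ precisely because the $P_I$ are a SAGBI basis, and this kernel is the toric ideal $J_{B_\ell}$ of the matching field monomials. (One can also read this off Theorem~\ref{prop:quad}: $\init_{\wb_\ell}(I_n)$ is generated by the quadratic binomials $P^T - P^S$ over row-wise equal tableaux, which is the defining binomial ideal of $A_\ell$.)

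Finally, I would conclude via the standard Rees-algebra / one-parameter subgroup construction: given a homogeneous ideal $I_n \subset R$ and a weight $\wb_\ell$, the Rees-type algebra $\widetilde{R} = \bigoplus_{m} (\text{filtration piece}) \cdot t^{-m} \subset R[t]$ is flat over $\mathbb{K}[t]$, its fibre over $t \neq 0$ is $R/I_n$ (the Pl\"ucker algebra, cutting out $\Flag_n$), and its fibre over $t = 0$ is $R/\init_{\wb_\ell}(I_n) = A_\ell$, the coordinate ring of a toric variety. Taking $\Proj$ of this family over $\mathbb{A}^1$ yields a flat family whose special fibre is the toric variety $V(J_{B_\ell})$ and whose general fibre is $\Flag_n$ — i.e., a toric degeneration. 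Since $\ell$ with $0 \le \ell < n$ was arbitrary, each block diagonal matching field $B_\ell$ produces such a degeneration, which is the assertion of the corollary.

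I do not expect a serious obstacle here: the corollary is a formal consequence of Theorem~\ref{prop:SAGBI} and well-documented deformation theory. The only points requiring a little care are (i) checking that $\Flag_n$ embeds projectively so that $\Proj$ of the (multi)graded Pl\"ucker algebra is genuinely $\Flag_n$ and the flat degeneration descends to $\Proj$, and (ii) confirming that the hypotheses of \cite[Theorem 11.4]{sturmfels1996grobner} — finite generation of $\mathcal{A}_n$, existence of a term order or weight refining $\wb_\ell$ so that the SAGBI condition yields a genuine flat family rather than just an algebra isomorphism of initial objects — are met; both are classical for Pl\"ucker algebras, so the proof is short.
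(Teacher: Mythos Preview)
Your proposal is correct and follows exactly the same route as the paper: the corollary is stated as an immediate consequence of Theorem~\ref{prop:SAGBI} together with \cite[Theorem~11.4]{sturmfels1996grobner}, and your write-up simply unpacks this citation. The paper offers no additional argument beyond that one sentence, so your expanded discussion of the Rees-family construction and the identification $\init_{\wb_\ell}(I_n)=J_{B_\ell}$ is a faithful elaboration of the same approach.
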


\begin{table}
    \centering
    \footnotesize
      \resizebox{7cm}{!}{\begin{tabular}{|c|c|c|c|}
        \hline
       $\Flag_4$ & ${\ell}$ & The $f$-vector of the toric polytope \\
        \hline
       \hline
        \multirow{3}{*}{ } & 0 & 40, 132, 186, 139, 57, 12    \\
        \cline{2-3}
        & 1 & 42, 141, 202, 153, 63, 13    \\
        \cline{2-3}
        & 2,3 & 43, 146, 212, 163, 68, 14  \\
        \hline
    \end{tabular}
    }
    \caption{For $\Flag_4$ and each matching field $B_{\ell}$ we calculate the f-vector of its associated toric polytope.}
    \label{tab:toric_polytopes}
\end{table}

\begin{remark}
Using the combinatorial tools of matching field tableaux we have provided a family of toric degenerations of opposite Schubert varieties and Richardson varieties in \cite{OllieFatemeh3,Ollie4}. We remark that all such toric degenerations can be realized as Gr\"obner degenerations, nevertheless, this is not true in general; See e.g.
\cite{kateri2015family}.
\end{remark}

\begin{remark}
Let $B_\ell$ be a block diagonal matching field. We have seen that $B_{\ell}$ produces a toric degeneration of $\Flag_n$. So we define the \emph{matching field polytope} to be the polytope of the associated toric variety.
Table~\ref{tab:toric_polytopes} shows computational results for matching field polytopes for $\Flag_4$. In particular, we see that the matching fields give distinct toric ideals except for $\ell = 2$ and $\ell = 3$, which are unimodular equivalent. We note that here the polytopes for $\ell = 0$ and $\ell = 1$ are the Gelfand-Cetlin polytope and the FFLV polytope respectively.
In \cite{Akihiro} we
study these polytopes from a geometric point of view.
\end{remark}

\bibliographystyle{alpha} 
\bibliography{Trop1.bib}






\end{document}